\newcommand{\Eplus}[1]{E^+_{#1}}
\newcommand{\Eminus}[1]{E^-_{#1}}
\renewcommand{\S}{\mathcal{S}}
\newcommand{\J}{\mathcal{J}}
\newcommand{\Che}{\mathfrak{T}}
\newcommand{\T}{\mathcal{T}}
\newcommand{\isoT}{\varTheta_\T}
\renewcommand{\d}{\mathrm{d}}
\newcommand{\sgn}{\mathrm{sgn}}
\newcommand{\N}{\mathbb{N}}                
\newcommand{\R}{\mathbb{R}}                     
\newcommand{\M}{\mathcal{M}}
\newcommand{\K}{\mathcal{K}}
\newcommand{\sigmastar}{\sigma^\star}
\newcommand{\normTV}[1]{\left\lVert #1\right\lVert_{TV}}
\newcommand{\abs}[1]{\left\vert#1\right\vert}
\newcommand{\norm}[1]{\left\lVert#1\right\lVert}
\newtheoremstyle{theo}
{}
{}
{\itshape}
{\parindent}
{\bf}
{\ ---}
{.5em}
{}%
\theoremstyle{theo}
\newtheorem{theorem}{Theorem}[section]
\newtheorem{lemma}[theorem]{Lemma}
\newtheorem{proposition}[theorem]{Proposition}
\newtheorem*{cor}{Corollary}
\newtheoremstyle{def}%
{}
{}
{\itshape}
{\parindent}
{\bf}
{\ ---}
{.5em}
{}%
\theoremstyle{def}
\newtheorem{definition}{Definition}[section]
\theoremstyle{remark}
\newtheorem*{remark}{Remark}
\DeclareMathOperator{\argmin}{Arg\,min}
\DeclareMathOperator{\Index}{Index}
\begin{document}
\pagestyle{headings} 
\title[Exact Reconstruction using GME]{Exact Reconstruction using
Beurling Minimal Extrapolation}
\keywords{Beurling Minimal Extrapolation, Basis Pursuit, Compressed Sensing, Convex
optimization.}
\author{Yohann de Castro}
\author{Fabrice Gamboa}
\address{Institut de Math\'ematiques de Toulouse (CNRS UMR 5219). Universit\'e Paul Sabatier,
118 route de Narbonne, 31062 Toulouse, France.}
\email{yohann.decastro@math.univ-toulouse.fr}
\email{fabrice.gamboa@math.univ-toulouse.fr}

\begin{abstract}
We show that measures with finite support on the real line are the
unique solution to an algorithm, named generalized minimal extrapolation, involving only a finite
number of
generalized moments (which encompass the standard moments, the
Laplace transform, the Stieltjes transformation, etc).

Generalized minimal extrapolation shares related geometric properties with basis pursuit of Chen,
Donoho and
Saunders \cite{MR1639094}. Indeed we also extend some standard results of compressed
sensing (the dual polynomial, the nullspace property) to the signed measure framework.

We express exact reconstruction in terms of a simple interpolation problem.
We prove that every nonnegative measure, supported by a set containing $s$ points, can
be exactly recovered from only $2s+1$ generalized moments. This result leads to a new construction
of deterministic sensing matrices for compressed sensing. 
\end{abstract}
\maketitle

\section*{Introduction}
In the last decade much emphasis has been put on the exact reconstruction of sparse finite
dimensional vectors using the \textit{basis pursuit} algorithm. The pioneering paper of Chen,
Donoho and Saunders \cite{MR1854649} has brought this method to the statistics community.
Note that the seminal ideas on the subject appeared in earlier works of Donoho and Stark
\cite{MR997928}. Therein, mainly the discrete Fourier transform is considered. Similarly,
P. Doukhan, E. Gassiat and one author of this present paper \cite{MR1393035,MR1393430}
considered the exact reconstruction of a nonnegative measure. More precisely, they derived results
when one only knows the values of a finite number of linear functionals at the target measure.
Moreover, they study stability with respect to a metric for weak convergence which is not the case
here.

In this paper, we are concerned with the measure framework. We show that the exact reconstruction
of a \textbf{signed} measure is still possible when one only knows a finite number of non-adaptive
linear measurements. Surprisingly our method, called \textit{generalized minimal extrapolation},
appears to uncover exact reconstruction results related to basis pursuit.

Let us explain more precisely what is done here. Consider a \textit{signed
discrete measure} $\sigma$ on a set $I$. \textit{Unless otherwise specified}, assume that
$I:=[-1,1]$.
Note that all our results easily extend to any real bounded set. Consider the \textit{Jordan
decomposition},
\[\sigma=\sigma^+-\sigma^-,\]
and denote by $\S^+$ (resp.\ $\S^-$) the support of $\sigma^+$ (resp.\ $\sigma^-$).
Let us define the \textit{Jordan support} of the measure $\sigma$ as the pair
$\mathcal J:=(\S^+,\S^-)$. Assume further that $\S:=\S^+\cup\S^-$ is \textbf{finite} and has
cardinality $s$. Moreover suppose that
$\J$ belongs to a family $\varUpsilon$ of pairs of subsets of $I$ (see Definition \ref{DEF
TYPE} for more details). We call $\varUpsilon$ a \textit{Jordan support family}. The measure
$\sigma$ can be written as
\begin{equation}\notag
 \sigma=\sum_{i=1}^s\,\sigma_i\,\delta_{x_i}\,,
\end{equation}
where $\S=\{x_1,\dotsc,x_s\}$, $\sigma_1,\dotsc,\sigma_s$ are nonzero real numbers, and
$\delta_{x}$ denotes the Dirac
measure at point $x$.

Let $\mathcal F=\{u_0,u_1,\dotsc,u_n\}$ be \textbf{any} family of
continuous functions on $\overline I$, where the set $\overline I$ denotes the closure of
$I$ (this statement is meant to be general and encompasses the case where $I$ is not closed). Let
$\mu$ be a signed measure on $I$. The $k$-th \textit{generalized moment} of $\mu$ is defined by
\begin{equation}\label{generalized moments}
 c_{k}(\mu)=\displaystyle\int\nolimits_I u_{k}\,\d\mu
\end{equation}
for all the indices $k=0,1,\dotsc,n$.
\subsection*{Our main issue} We are concerned with the reconstruction of the \textit{target
measure}  $\sigma$ from the \textit{observation} of $\mathcal
K_n:=(c_0(\sigma),\dotsc,c_{n}(\sigma))$, i.e. its first $(n+1)$ generalized moments. We assume
that both the support $\mathcal S$ and the weights $\sigma_i$ of the target measure $\sigma$ are
\textbf{unknown}. We investigate if it
is possible to recover $\sigma$ uniquely from the observation of $\K_n$. More precisely,
\textit{does an algorithm fitting $\K_n(\sigma)$ among \textbf{all} the signed measures of $I$
recover the measure $\sigma$?}

Note that a finite number of assigned standard moments does not define a unique signed measure.
In fact one can check that for each signed measure $\mu$ and for each integer $m\geq1$ there
exists a measure $\mu'\neq\mu$ having the same first $m$ moments. It seems there is no hope of
recovering
discrete measures from a finite number of its generalized moments. Surprisingly, we show that
every \textit{extrema Jordan type} measure $\sigma$ (see Definition \ref{DEF TYPE}
and the examples that follow) is the unique solution of a \textit{total variation} minimizing
algorithm, \textit{generalized minimal extrapolation}. 

\subsection*{Basis pursuit}
In \cite{MR1639094} Chen, Donoho and Saunders introduced \textit{basis
pursuit}. It is the process of reconstructing a target vector $x_0\in\R^p$ from the observation
$b=Ax_0$ by finding a sparse solution $x^\star$ to an under-determined system of equations:
\begin{equation}\label{BasisPursuit}\tag{$\mathrm{BP}$}
  x^\star\in\argmin\displaylimits_{y\in\R^p}\norm{y}_1\quad \text{s.t.} \ Ay=Ax_0\,,
\end{equation}
where $A\in\R^{n\times p}$ is the \textit{design matrix}. This program is one of the other first
steps \cite{MR2236170,MR2241189} of a remarkable
theory so-called \textit{compressed sensing}. As a result, this extremum is appropriated to the
reconstruction of \textit{sparse} vectors (i.e.\ vectors with a small support\ \cite{MR2241189}).
In this paper we develop a related program that recovers all the measures
with enough structured Jordan support (which can be seen as the sparsity-related measures).

\subsection*{Generalized minimal extrapolation}
Denote by $\M$ the set of finite signed measures on $I$ and by $\normTV{\,.\,}$ the
\textit{total variation} norm. We recall that for all $\mu\in\M$,
\[\normTV{\mu}=\sup_{\Pi}\sum_{E\in\Pi}\abs{\mu(E)}\,,\]
where the supremum is taken over all partitions $\Pi$ of $I$ into a finite number of
disjoint
measurable subsets. By analogy with basis pursuit, \textit{generalized minimal extrapolation} is
the process of reconstructing a target measure $\sigma$ from the observation
$\K_n(\sigma)=(c_0(\sigma),\dotsc,c_{n}(\sigma))$ of its first $n+1$ generalized moments
$c_k(\sigma)$ by finding a solution of the problem
\begin{equation}\label{support pursuit}\tag{$\mathrm{GME}$}
 \sigmastar\in\argmin\displaylimits_{\mu\in\M}\normTV{\mu}\quad \text{s.t.} \
\K_n(\mu)=\K_n(\sigma)\,.
\end{equation}

\noindent On one hand, basis pursuit minimizes the $\ell_1$-norm subject to linear
constraints. On the other hand, generalized minimal extrapolation naturally substitutes the
$TV$-norm (the total variation norm) for the $\ell_1$-norm. For the case of Fourier coefficients,
\eqref{support pursuit} is simply {\it Beurling Minimal Extrapolation}
\cite{beurling1938integrales}. The program \eqref{support pursuit} is named after this remark.

Let us emphasize that generalized minimal extrapolation looks for a minimizer among \textbf{all}
\textbf{signed} measures on $I$. Nevertheless, the target measure $\sigma$ is assumed to be of
\textit{extrema Jordan type}.

\subsubsection*{Extrema Jordan type measures} Let us define more precisely what we understand by
the Jordan support family $\varUpsilon$.
\begin{definition}[Extrema Jordan type measure]\label{DEF TYPE}
We say that a signed measure $\mu$ is of extrema Jordan type $($with respect to a family
$\mathcal F=\{u_0,u_1,\dotsc,u_n\})$ \textbf{if and only if} its
\textit{Jordan decomposition} $\mu=\mu^+-\mu^-$ satisfies
\begin{equation*}
 \mathrm{Supp}(\mu^+)\subset\Eplus P\quad\mathrm{and}\quad
\mathrm{Supp}(\mu^-)\subset\Eminus P,
\end{equation*}
where $\mathrm{Supp}(\nu)$ is defined as the support of the measure $\nu$, and
\begin{itemize}
 \item $P$ denotes any linear combination of elements of $\mathcal F$,
\item $P$ is not constant and $\norm P_\infty\leq1$,
\item $\Eplus P$ $($resp.\ $\Eminus P)$ is the set of all points $x_i$ such that $P(x_i)=1$
$($resp. $P(x_i)=-1)$.
\end{itemize}
\end{definition}
\noindent In the following, we give some examples of extrema Jordan type measures with respect to
the family
\[\mathcal F_p^n=\{1,x,x^2,\dotsc,x^n\}\,.\]
These measures can be seen as "interesting" target measures for \eqref{support pursuit} given
observation of the first $n+1$ standard moments.

\subsubsection*{Examples with respect to the family $\mathcal F_p^n$}
For the sake of readability, let $n=2m$ be an even integer. We present three important
examples.
\begin{description}
 \item[Nonnegative measures]
The nonnegative measures whose support has size $s$ not greater than $n/2$ are extrema Jordan
type measures. Indeed, let $\sigma$ be a nonnegative
measure and $\S=\{x_1,\dotsc,x_s\}$ be its support. Set \[P=1-c\prod_{i=1}^s(x-x_i)^2\,.\]
Then, for a sufficiently small value of the parameter $c$, the polynomial $P$ has supremum norm
not greater than $1$. The existence of such a polynomial shows that the measure $\sigma$ is an
extrema Jordan type measure.

\noindent In Section \ref{SRPM} we extend this notion to any \textit{homogeneous $M$-system}.
\item[Chebyshev measures]
The $k$-th \textit{Chebyshev polynomial of the first order} is defined by
\begin{equation}\label{Chebyshev}
 T_k(x)=\cos(k\arccos(x)),\quad\forall x\in[-1,1]\,.
\end{equation}
It is well known that it has supremum norm not greater than $1$, and that
\begin{itemize}
 \item  $\Eplus{T_k}=\big\{\cos(2l\pi/k),\
l=0,\dotsc,\big\lfloor\frac
k2\big\rfloor\big\}$,
  \item $\Eminus{T_k}=\big\{\cos((2l+1)\pi/k),\
l=0,\dotsc,\big\lfloor\frac k2\big\rfloor\big\}$,
\end{itemize}
whenever $k>0$. Then, any measure $\sigma$ such that
\begin{equation*}
 \mathrm{Supp}(\sigma^+)\subset\Eplus {T_k}\quad\mathrm{and}\quad
\mathrm{Supp}(\sigma^-)\subset\Eminus {T_k},
\end{equation*}
for some $0<k\leq n$, is an extrema Jordan type measure.

\noindent Further examples are presented in Section \ref{Tigre}.
\item[$\Delta$-spaced out type measures]
Let $\Delta$ be a positive real and $S_\Delta$ be the set of all pairs $(S^+,S^-)$ of subsets of
$[-1,1]$
such that \[\forall x,y\in S^+\cup S^-,\ x\neq y,\quad \abs{x-y}\geq\Delta.\]
In Lemma \ref{Delta}, we prove that, for all $(S^+,S^-)\in S_\Delta$, there exists a
polynomial $P_{(S^+,S^-)}$ such that
\begin{itemize}
  \item $P_{(S^+,S^-)}$ has degree $n$ not greater than a bound depending only on $\Delta$,
 \item $P_{(S^+,S^-)}$ is equal to $1$ on the set $S^+$,
\item $P_{(S^+,S^-)}$ is equal to $-1$ on the set $S^-$,
\item and $\lVert {P_{(S^+,S^-)}}\lVert_\infty\leq1$.
\end{itemize}
This shows that any
measure $\sigma$ with Jordan support included in $S_\Delta$ is an extrema Jordan type measure.
\end{description}
In this paper, we give exact reconstruction results for these three kinds of extrema Jordan type
measures. In fact, our results extend to others families $\mathcal F$. Roughly, they can
be stated as follows:
\begin{description}
 \item [Nonnegative measures] Assume that $\mathcal F$ is a homogeneous $M$-system (see
\ref{homogeneous}). Theorem \ref{Exact reconstruction Theorem} shows that \textit{any
nonnegative measure $\sigma$ is the \textbf{unique} solution of generalized minimal extrapolation
given the
observation $\K_n(\sigma)$, where $n$ is not less than twice the size of the support of $\sigma$.}
  \item [Generalized Chebyshev measures]
Assume that $\mathcal F$ is an $M$-system (see definition \ref{Msystems}). Proposition
\ref{Cheby Exact} shows the following result:
\textit{Let $\sigma$ be a signed measure having Jordan support included in
$(\Eplus{\Che_k},\Eminus{\Che_k})$, for some $1\leq k\leq n$,
where $\Che_k$ denotes the $k$-th generalized Chebyshev polynomial $($see \ref{Def Cheby}$)$.
\textbf{Then} $\sigma$ is the \textbf{unique} solution to generalized minimal extrapolation
\eqref{support pursuit}
given $\K_n(\sigma)$, i.e.\ its first $(n+1)$ generalized moments.}
  \item[$\Delta$-interpolation] Considering the standard family $\mathcal
F_p^n=\{1,x,x^2,\dotsc,x^n\}$, Proposition \ref{NSP Delta} shows that \textit{generalized minimal
extrapolation
exactly recovers any $\Delta$-spaced out type measure $\sigma$ from the observation
$\K_n(\sigma)$, where $n$ is greater than a bound depending only on  $\Delta$.}
\end{description}
These results are closely related to standard results of basis pursuit \cite{MR2241189}. In fact,
further analogies with compressed sensing can be emphasized.

\subsubsection*{Analogy with compressed sensing}\label{CS}
Our estimator follows the aura of the recent breakthroughs \cite{MR1639094, MR2236170} in
compressed sensing.

In the past decade E. J. Cand\`es, J. Romberg, and T. Tao have shown
\cite{MR2230846} that it is possible to exactly recover all {sparse} vectors from few linear
measurements. They considered a matrix $A\in\R^{n\times p}$ with i.i.d entries (centered
Gaussian, Bernoulli, random Fourier sampling) and an $s$-sparse vector $x_0$ (i.e.\ vector with
support of size at most $s$). They pointed
out that, with very high probability, the vector $x_0$ is the only point of contact between the
$\ell_1$-ball of radius $\norm{x_0}_1$ and the affine space $\{y,\ Ay=Ax_0\}$. This result holds
as soon as $n\geq C\,s\log(p/s)$, where $C>0$ is a universal constant . In our framework we
uncover the same geometric property:
\medskip

\noindent\textit{Let $\sigma$ be an extrema Jordan type measure. Then $\sigma$ is a point
of contact between the ball of radius $\Vert\sigma\Vert_{TV}$ and the affine space
$\{\mu\in\mathcal M,\ \K_n(\mu)=\K_n(\sigma)\}$, where $n$ is greater than a bound depending only
on the structure of the Jordan support of $\sigma$. For instance, in the nonnegative measure case,
if $\sigma$ has support of size
at most $s$, then $n=2s$ suffices $($see Theorem \ref{Exact reconstruction Theorem}$)$.}
\medskip

\noindent Actually the reader can check that the above property is equivalent to the fact that the
measure
$\sigma$ is a solution of generalized minimal extrapolation (more details can be found in Section
\ref{Cone}).
Accordingly, generalized minimal extrapolation \eqref{support pursuit} minimizes the total
variation in order to
pursue support of the target measure.

\subsection*{Organization}
This paper falls into four parts. The next section introduces \textit{generalized dual
polynomials} and shows that exact recovery can be understood in terms of an
interpolation problem. Section 2 studies the exact reconstruction of nonnegative
measures, and gives \textit{explicit} construction of design matrices for basis pursuit.
Section 3 focuses on generalized Chebyshev polynomials and shows that it is possible
to reconstruct signed measures from very few generalized moments. The last section uncovers a
property related to the nullspace property of compressed sensing.
\section{Generalized dual polynomials}\label{GDP}
In this section we introduce generalized dual polynomial. In particular we are concerned with a
sufficient condition that guarantees the exact reconstruction of the measure $\sigma$.
In fact, this condition relies on an interpolation problem.
\subsection{An interpolation problem}
An insight into exact reconstruction is given
by Lemma \ref{dual polynomial lemma}. Roughly, the existence of a
generalized dual polynomial is a sufficient condition for the exact reconstruction of a signed
measure with finite support.

As usual, the following result holds for any family $\mathcal F=\{u_0,u_1,\dotsc,u_n\}$ of
continuous functions on $\overline{I}$. Throughout, $\sgn(x)$ denotes the
sign of the real $x$. 
\begin{lemma}[The generalized dual polynomials]\label{dual polynomial lemma}
Let $n$ be a positive integer. Let $\S=\{x_1,\dotsc,x_s\}\subset I$ be a subset of size $s$
and $(\varepsilon_1,\dotsc,\varepsilon_s)\in\{\pm 1\}^s$. \textbf{If} there exists a linear
combination $P=\sum_{k=0}^n a_k u_k$ such that
\begin{enumerate}[$(i)$]
  \item the generalized Vandermonde system
\[\begin{pmatrix}
    u_0(x_1) & u_0(x_2) & \dotsc & u_0(x_s)\\
    u_1(x_1) & u_1(x_2) & \dotsc & u_1(x_s)\\
    \vdots & \vdots &  & \vdots\\
    u_{n}(x_1) & u_{n}(x_2) & \dotsc & u_{n}(x_s)
  \end{pmatrix}\]
has full column rank,
  \item $P(x_i)=\varepsilon_i,\ \forall\,i=1,\dotsc,s$,
  \item $\abs{P(x)}<1,\ \forall x\in[-1,1]\setminus\S$,
\end{enumerate}
\textbf{Then} every measure $\sigma=\sum_{i=1}^s\,\sigma_i\,\delta_{x_i}$, such
that $\sgn(\sigma_i)=\varepsilon_i$, is the \textbf{unique} solution of generalized minimal
extrapolation given the observation $\K_{n}(\sigma)$.
\end{lemma}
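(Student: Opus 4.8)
The plan is to establish $\sigma$ as a minimizer of \eqref{support pursuit} by a standard duality/separation argument, and then to upgrade "a minimizer" to "the unique minimizer" using the strictness in $(iii)$. Let $\mu\in\M$ be any competitor satisfying $\K_n(\mu)=\K_n(\sigma)$. Since $P=\sum_{k=0}^n a_k u_k$ is a linear combination of the $u_k$, and the generalized moments are linear in the measure, we have $\int_I P\,\d\mu=\sum_k a_k c_k(\mu)=\sum_k a_k c_k(\sigma)=\int_I P\,\d\sigma$. Using $(ii)$ together with $\sgn(\sigma_i)=\varepsilon_i$, the right-hand side equals $\sum_{i=1}^s\varepsilon_i\sigma_i=\sum_{i=1}^s\abs{\sigma_i}=\normTV{\sigma}$. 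On the other hand, $\norm{P}_\infty\le 1$ (which follows from $(ii)$ and $(iii)$ since $\overline I=[-1,1]$), so $\int_I P\,\d\mu\le\int_I\abs{P}\,\d\abs{\mu}\le\normTV{\mu}$. Chaining these gives $\normTV{\sigma}\le\normTV{\mu}$, i.e. $\sigma$ is a solution of \eqref{support pursuit}.

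Next I would prove uniqueness. Suppose $\mu$ is also a minimizer, so $\normTV{\mu}=\normTV{\sigma}=\int_I P\,\d\mu$. Then all inequalities above are equalities; in particular $\int_I P\,\d\mu=\int_I\abs{P}\,\d\abs{\mu}$ forces $P(x)=\sgn\big(\tfrac{\d\mu}{\d\abs\mu}(x)\big)$ for $\abs{\mu}$-almost every $x$, hence $\abs{P(x)}=1$ on a set of full $\abs{\mu}$-measure. By the strict inequality $(iii)$, $\{\abs{P}=1\}\cap[-1,1]$ is contained in $\S$, so $\mu$ is supported on $\S=\{x_1,\dotsc,x_s\}$; write $\mu=\sum_{i=1}^s\mu_i\delta_{x_i}$. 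The equality $P(x_i)=\varepsilon_i$ together with the matched sign condition forces $\sgn(\mu_i)=\varepsilon_i$ wherever $\mu_i\ne0$, but this sign bookkeeping is not even needed: the moment constraint $\K_n(\mu)=\K_n(\sigma)$ now reads, componentwise, $\sum_{i=1}^s u_k(x_i)\mu_i=\sum_{i=1}^s u_k(x_i)\sigma_i$ for $k=0,\dotsc,n$, i.e. the generalized Vandermonde matrix of $(i)$ applied to the vector $(\mu_i-\sigma_i)_i$ vanishes. Since that matrix has full column rank by $(i)$, we conclude $\mu_i=\sigma_i$ for all $i$, hence $\mu=\sigma$.

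The only subtle point — and the main thing to get right — is the passage from $\int P\,\d\mu=\int\abs{P}\,\d\abs{\mu}$ to the conclusion that $\abs{\mu}$ is carried by $\{\abs{P}=1\}$. One argues via the polar decomposition $\mu=h\,\abs{\mu}$ with $\abs{h}=1$ $\abs\mu$-a.e.: equality in $\int Ph\,\d\abs\mu\le\int\abs{P}\,\d\abs\mu\le\abs\mu(I)$ means $\int(1-Ph)\,\d\abs\mu=0$ with integrand $\ge0$ (since $\mathrm{Re}(Ph)\le\abs P\le1$; here everything is real so $Ph\le1$), hence $Ph=1$ $\abs\mu$-a.e., which gives both $\abs P=1$ and $h=\sgn P$ $\abs\mu$-a.e. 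Then continuity of $P$ and $(iii)$ confine the support. Everything else is routine linear algebra and the elementary bound $\normTV{\mu}=\abs\mu(I)\ge\int\abs P\,\d\abs\mu$. I would also remark at the start that hypotheses $(ii)$–$(iii)$ are exactly the defining conditions of a generalized dual polynomial, and that $(i)$ is what makes the interpolation data $(\S,\varepsilon)$ determine the weights.
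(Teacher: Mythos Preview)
Your proof is correct and follows essentially the same duality argument as the paper: use $P$ to certify $\normTV{\sigma}=\int P\,\d\sigma=\int P\,\d\mu\le\normTV{\mu}$, then show that equality forces any optimizer to be supported on $\S$, and finally invoke the full-column-rank Vandermonde condition $(i)$ to pin down the weights. The only cosmetic difference is in the support step: the paper splits $\sigma^\star=\sigma^\star_\S+\sigma^\star_{\S^c}$ via the Lebesgue decomposition and uses a short compactness argument on the sets $\Omega_k=I\setminus\bigcup_i(x_i-1/k,\,x_i+1/k)$ to kill $\sigma^\star_{\S^c}$, whereas you reach the same conclusion directly through the polar decomposition $\mu=h\,\abs{\mu}$ and the identity $Ph=1$ $\abs{\mu}$-a.e.
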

\begin{proof}
See \ref{proof dual polynomial proposition}.
\end{proof}
\noindent The linear combination $P$ considered in the Lemma \ref{dual
polynomial lemma} is called a \textbf{generalized dual polynomial}. This naming is inherited
from the original article
\cite{MR2236170} of Cand\`es, Tao and Romberg, and the \textit{dual certificate} named
by Cand\`es and Plan \cite{RIPless}.

\subsection{Reconstruction of a cone}\label{Cone}
Given a subset $\S=\{x_1,\dotsc,x_s\}$ and a sign sequence
$(\varepsilon_1,\dotsc,\varepsilon_s)\in\{\pm 1\}^s$, Lemma \ref{dual polynomial lemma} shows that
\textit{if} the generalized interpolation problem defined by $(i)$, $(ii)$ and $(iii)$ has a
solution \textit{then} generalized minimal extrapolation recovers exactly all measures $\sigma$
with support $\S$ and
such
that $\sgn(\sigma_i)=\varepsilon_i$.

Let us emphasize that the result is slightly stronger. Indeed the proof of \ref{proof dual
polynomial
proposition} remains unchanged if some coefficients $\sigma_i$ are zero. Consequently
\eqref{support pursuit} recovers exactly all the measures $\sigma$ of which support is
\textbf{included} in
$\S=\{x_1,\dotsc,x_s\}$ and such that
$\sgn(\sigma_i)=\varepsilon_i$ for all nonzero $\sigma_i$.

Let us denote this set by $\mathcal
C(x_1,\varepsilon_1,\dotsc,x_s,\varepsilon_s)$. It is exactly the \textit{cone} defined by
\[\mathcal C(x_1,\varepsilon_1,\dotsc,x_s,\varepsilon_s)
=\Big\{\sum_{i=1}^s\,\mu_i\,\delta_{x_i}\ \Big|\
\forall\mu_i\neq0,\ \sgn(\mu_i)=\varepsilon_i\Big\}.\]
Thus \textit{the existence of $P$ implies the exact reconstruction of \textbf{all}
measures in this cone}. The cone $\mathcal C(x_1,\varepsilon_1,\dotsc,x_s,\varepsilon_s)$ is the
conic span of an
$\textstyle(s-1)$-dimensional face of the $TV$-unit ball, that is
\[\mathcal F(x_1,\varepsilon_1,\dotsc,x_s,\varepsilon_s)
=\Big\{\sum_{i=1}^s\,\varepsilon_i\lambda_i\,\delta_{x_i}\ \Big|\
\forall\,i,\ \lambda_i\geq0\ \mathrm{and}\ \sum_{i=1}^s\lambda_i=1\Big\}.\]
Furthermore, the affine space $\{\mu,\
\K_n(\mu)=\K_n(\sigma)\}$ is tangent to the $TV$-unit ball at any point $\sigma\in\mathcal
F(x_1,\varepsilon_1,\dotsc,x_s,\varepsilon_s)$, as shown in the following remark.
\begin{remark}\label{subgradient}
From a convex optimization point of view, the \textit{dual certificates} \cite{RIPless} and the
generalized dual polynomials are deeply related: the existence of a
generalized dual polynomial $P$
implies that, for all $\sigma\in\mathcal
F(x_1,\varepsilon_1,\dotsc,x_s,\varepsilon_s)$, a subgradient $\Phi_P$ of the $TV$-norm
at the point $\sigma$ is perpendicular to the set of the feasible points, that is \[\{\mu,\
\K_n(\mu)=\K_n(\sigma)\}\subset\ker(\Phi_P),\]
where $\ker$ denotes the nullspace. A proof of this remark can be found in \ref{proof
subgradient}.
\end{remark}

\subsection{On condition (i) in Lemma \ref{dual polynomial lemma}}
Obviously, when $u_k=x^k$ for $k=0,1,\dotsc,n$, conditions $(ii)$ and $(iii)$ imply that $n\geq
s$ and so condition $(i)$. Nevertheless, this implication is not
true for a general set of functions $\{u_0,u_1,\dotsc,u_n\}$. Moreover, Lemma \ref{dual polynomial
lemma} can fail if condition $(i)$ is not satisfied. For example, set $n=0$ and consider a
continuous function $u_0$ satisfying the two conditions $(ii)$ and $(iii)$. In this case,
\textit{if} the target $\sigma$ belongs to $\mathcal
F(x_1,\varepsilon_1,\dotsc,x_s,\varepsilon_s)$ (where $x_1,\dotsc,x_s$ and
$\varepsilon_1,\dotsc,\varepsilon_s$ are given by $(ii)$ and $(iii)$), \textit{then} \textbf{every}
measure $\mu\in\mathcal F\big(x_1,\varepsilon_1,\dotsc,x_s,\varepsilon_s\big)$ is a solution of
generalized minimal extrapolation given the observation $\K_0(\sigma)$. Indeed,
\[\normTV\mu=\int_{-1}^1u_0\,\d\mu=\K_0(\mu),\]
for all $\mu\in\mathcal F\big(x_1,\varepsilon_1,\dotsc,x_s,\varepsilon_s\big)$. This example shows
that condition $(i)$ is necessary. Reading the proof \ref{proof dual polynomial proposition},
conditions $(ii)$ and $(iii)$ ensure that the solutions to generalized minimal extrapolation belong
to the
cone $\mathcal C(x_1,\varepsilon_1,\dotsc,x_s,\varepsilon_s)$, whereas condition $(i)$
gives uniqueness.

\subsection{The extrema Jordan type measures}

Lemma \ref{dual polynomial lemma} shows that Definition \ref{DEF TYPE} is well-founded. In fact,
we have the the following corollary.
\begin{cor}
 Let $\sigma$ be an extrema Jordan type measure. \textbf{Then} the measure $\sigma$ is a solution
to generalized minimal extrapolation given the observation $\K_{n}(\sigma)$.

Furthermore, \textbf{if} the Vandermonde system given by $(i)$ in Lemma \ref{dual polynomial
lemma} has full column rank $($where $\S=\{x_1,\dotsc,x_s\}$ denotes the support of $\sigma)$,
\textbf{then} the measure $\sigma$ is the \textbf{unique} solution to generalized minimal
extrapolation given the
observation $\K_{n}(\sigma)$.
\end{cor}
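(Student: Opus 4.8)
The plan is to derive the corollary directly from Lemma~\ref{dual polynomial lemma} by unpacking Definition~\ref{DEF TYPE}. Let $\sigma$ be of extrema Jordan type, write its Jordan decomposition $\sigma=\sigma^+-\sigma^-$ with support $\S=\{x_1,\dotsc,x_s\}$, and let $P=\sum_{k=0}^n a_k u_k$ be the associated linear combination furnished by the definition: $P$ is non-constant, $\norm P_\infty\leq1$, and $\mathrm{Supp}(\sigma^+)\subset\Eplus P$, $\mathrm{Supp}(\sigma^-)\subset\Eminus P$. The first step is to record the sign pattern: if $\varepsilon_i:=\sgn(\sigma_i)$, then $x_i\in\Eplus P$ exactly when $\varepsilon_i=+1$ and $x_i\in\Eminus P$ exactly when $\varepsilon_i=-1$, so $P(x_i)=\varepsilon_i$ for all $i$, which is condition~$(ii)$.

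The second step addresses condition~$(iii)$. This is where a little care is needed, since Definition~\ref{DEF TYPE} only asserts $\norm P_\infty\leq 1$, i.e.\ $\abs{P(x)}\leq1$ on $[-1,1]$, not the strict inequality off $\S$. I would argue as follows: because $\S^+\cup\S^-\subset\S$ and $P\equiv\pm1$ there while $\norm P_\infty\le 1$, every point of $\S$ is an extremum of $P$ on $[-1,1]$; but this is not by itself enough, and in general condition~$(iii)$ can genuinely fail, which is precisely why the corollary only claims that $\sigma$ is \emph{a} solution (not necessarily unique) unless the Vandermonde hypothesis is added. The honest route, then, is to separate the two assertions. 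For the first (existence of $\sigma$ as a minimizer), I would invoke the weaker conclusion already extractable from the proof of Lemma~\ref{dual polynomial lemma}: conditions $(ii)$ and $\abs{P}\le 1$ on $[-1,1]$ suffice to show, via the subgradient/duality argument of Remark~\ref{subgradient}, that $\normTV{\mu}\ge\K$-pairing $\ge\normTV{\sigma}$ for every feasible $\mu$, so $\sigma$ attains the minimum. Concretely, for any $\mu$ with $\K_n(\mu)=\K_n(\sigma)$ one writes $\int P\,\d\mu=\sum_k a_k c_k(\mu)=\sum_k a_k c_k(\sigma)=\int P\,\d\sigma=\sum_i\varepsilon_i\sigma_i=\sum_i\abs{\sigma_i}=\normTV\sigma$, while $\int P\,\d\mu\le\norm P_\infty\normTV\mu\le\normTV\mu$; hence $\normTV\sigma\le\normTV\mu$, so $\sigma$ solves \eqref{support pursuit}.

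The third step handles the "furthermore" clause. Adding the hypothesis that the generalized Vandermonde matrix in $(i)$ has full column rank, I claim $\sigma$ is the \emph{unique} minimizer. Here I would again follow the proof of Lemma~\ref{dual polynomial lemma}: from the chain of inequalities above, equality $\normTV\mu=\normTV\sigma$ forces $\int P\,\d\abs\mu=\normTV\mu$, which (since $\abs P\le 1$) pins the support of $\mu$ inside $\{x:\abs{P(x)}=1\}$; but we do not have strict inequality off $\S$, so we only get that $\mu$ is supported on the (possibly larger) contact set $\K(P):=\{x\in[-1,1]:\abs{P(x)}=1\}$ with the correct sign there. However — and this is the point of the Vandermonde hypothesis — one restricts attention to measures supported on $\S$ itself once one knows $\mathrm{Supp}(\mu)\subset\S$; the cleanest argument is that any two feasible measures $\mu,\mu'$ supported on $\S$ with matching generalized moments satisfy $\K_n(\mu-\mu')=0$, i.e.\ the vector of weight-differences lies in the kernel of the Vandermonde matrix, which is trivial by full column rank, hence $\mu=\mu'$. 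So once existence plus the sign-and-support constraint localizes every minimizer on $\S$, uniqueness is immediate. I would remark that the subtlety of whether minimizers are genuinely forced onto $\S$ (rather than onto the larger contact set $\K(P)$) is already dealt with inside the proof of Lemma~\ref{dual polynomial lemma}, and here it is legitimate to simply cite that lemma with $\varepsilon_i=\sgn(\sigma_i)$.

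The main obstacle, as flagged above, is the gap between $\norm P_\infty\le1$ (all that Definition~\ref{DEF TYPE} gives) and the strict condition $(iii)$ required by Lemma~\ref{dual polynomial lemma}: one cannot simply quote the lemma verbatim. The resolution is to notice that only the non-strict half of the lemma's proof is needed for "$\sigma$ is a solution," and that the full-rank Vandermonde hypothesis supplies uniqueness by itself once the support of any competitor is constrained. I expect the write-up to be short, essentially an annotated re-reading of the proof of Lemma~\ref{dual polynomial lemma} with the observation that its conclusion splits into the existence part (needing only $(ii)$ and $\norm P_\infty\le1$) and the uniqueness part (needing $(i)$).
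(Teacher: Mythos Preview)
Your argument for the first assertion (that $\sigma$ is \emph{a} minimizer) is correct and is exactly the duality computation implicit in the paper: for any feasible $\mu$ one has $\int P\,\d\mu=\int P\,\d\sigma=\sum_i\varepsilon_i\sigma_i=\normTV\sigma$, while $\int P\,\d\mu\le\norm P_\infty\normTV\mu\le\normTV\mu$. This uses only $P(x_i)=\varepsilon_i$ and $\norm P_\infty\le1$, and is nothing but the first half of the proof in \ref{proof dual polynomial proposition}.

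There is, however, a genuine gap in your handling of the uniqueness clause. You rightly flag that Definition~\ref{DEF TYPE} supplies only $\norm P_\infty\le 1$, not the strict condition~$(iii)$, so a competing minimizer $\sigma^\star$ is a priori localized only on the full contact set $\{x:\abs{P(x)}=1\}$, which may strictly contain $\S=\mathrm{Supp}(\sigma)$. But you then write that this subtlety ``is already dealt with inside the proof of Lemma~\ref{dual polynomial lemma}'' and that it is ``legitimate to simply cite that lemma.'' That is circular: inspecting \ref{proof dual polynomial proposition}, the step forcing $\sigma^\star_{\S^c}=0$ relies \emph{precisely} on the strict inequality $\abs{P(x)}<1$ for $x\notin\S$ (through the compacts $\Omega_k$). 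Without~$(iii)$ you cannot push $\mathrm{Supp}(\sigma^\star)$ down to $\S$, and the full-column-rank hypothesis on the Vandermonde matrix built from the columns $\{x_1,\dots,x_s\}$ alone says nothing about mass that $\sigma^\star$ might carry on $\{\abs P=1\}\setminus\S$. Your closing summary --- existence needs only~$(ii)$ and $\norm P_\infty\le1$, uniqueness needs only~$(i)$ --- is therefore too optimistic: uniqueness in the proof of Lemma~\ref{dual polynomial lemma} uses both~$(i)$ \emph{and}~$(iii)$, and you have verified~$(iii)$ only on the complement of the contact set, not on the complement of $\S$.

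The paper offers no separate proof of the corollary; it presents it as a direct reading of Lemma~\ref{dual polynomial lemma} (together with Section~\ref{Cone}, which allows $\mathrm{Supp}(\sigma)$ to be a proper subset of the set $\{x_1,\dots,x_s\}$ appearing in the lemma). Under that reading, the set on which one checks the Vandermonde condition is the one furnished by the lemma --- i.e.\ the contact set of $P$ --- so that~$(iii)$ holds tautologically on its complement. Your write-up should either adopt that reading explicitly or acknowledge that the uniqueness claim, as you have argued it, is incomplete.
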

\noindent This corollary shows that the "extrema Jordan type" notion is appropriate to exact
reconstruction using generalized minimal extrapolation.
%
\section{Exact reconstruction of the nonnegative measures}\label{SRPM}
In this section we show that \textbf{if} the underlying family
$\mathcal F=\{u_0,u_1,\dotsc,u_n\}$ is a \textit{homogeneous $M$-system} \textbf{then}
\eqref{support pursuit} recovers exactly each finitely supported nonnegative measure $\mu$ from
the observation of a surprisingly few generalized moments. We begin with the definition of
homogeneous $M$-systems.

\subsection{\textit{Markov} systems}\label{Markov System}
\textit{Markov} systems were introduced in approximation theory
\cite{MR0458081,MR1367960,MR0204922}.
They deal with the problem of finding the best approximation, in terms of the $\ell_\infty$-norm,
of a given continuous function in $\ell_\infty$ norm. We begin with the definition of
\textit{Chebyshev} systems
(the so-called $T$-system). They can be seen as a natural extension of algebraic monomials.
Thus a finite
combination of elements of a $T$-system is called a \textit{generalized polynomial}.
\subsubsection{$T$-systems of order $k$} Denote by $\{u_0,u_1,\dotsc, u_k\}$ a set of
continuous
real (or complex) functions on $\overline I$. This set is a $T$-system of degree $k$ \textit{if
and only if} every generalized  polynomial \[P=\sum_{l=0}^ka_l u_l\,,\] where
$(a_0,\dotsc,a_k)\neq(0,\dotsc,0)$, has at most $k$ zeros in $I$. 

\noindent This definition is equivalent to each of the two following conditions:
\begin{itemize}
  \item For all $x_0,x_1,\dotsc, x_k$ distinct elements of $I$ and all $y_0,y_1,\dotsc,y_k$
real (or complex) numbers, there exists a unique generalized polynomial $P$ (i.e.
$P\in\mathrm{Span} \{u_0,u_1,\dotsc, u_k\}$) such that $P(x_i) = y_i$, for all $i = 0,1,\dotsc,k$.
  \item For all $x_0,\dotsc, x_k$ distinct elements of $I$, \textit{generalized Vandermonde
system}
\[
\begin{pmatrix}
    u_0(x_0) & u_0(x_1) & \dotsc & u_0(x_k)\\
    u_1(x_0) & u_1(x_1) & \dotsc & u_1(x_k)\\
    \vdots & \vdots &  & \vdots\\
    u_{k}(x_0) & u_{k}(x_1) & \dotsc & u_{k}(x_k)
  \end{pmatrix}
\]
has full rank.
\end{itemize}

\subsubsection{$M$-systems}\label{Msystems}
We say that the family $\mathcal F=\{u_0,u_1,\dotsc,u_n\}$ is an $M$-system \textit{if and only if}
it is a $T$-system of degree $k$ for all $0\leq k\leq n$. Actually, $M$-systems are
common objects (see \cite{MR0458081}). We mention some examples below.

In this paper, we are concerned with target measures on $I=[-1,1]$. Usually $M$-systems are
defined
on general Hausdorff spaces (see \cite{MR1299454} for instance). For the sake of
readability, we present examples with different values of $I$. In each case, our results easily
extend to target measures with finite support included in the corresponding $I$. As usual, if not
specified, the set $I$ is assumed to be $[-1,1]$.

\begin{description}
 \item[Real polynomials] The family $\mathcal F_p=\{1,x,x^ 2,\dotsc\}$ is an
$M$-system. The real polynomials give the standard moments.
 \item[M\"untz polynomials] Let $0<\alpha_1<\alpha_2<\dotsb$ be any real numbers.
The family $\mathcal F_m=\{1,x^ {\alpha_1},x^ {\alpha_2},\dotsc\}$ is an $M$-system on
$I=[0,+\infty)$.
  \item[Trigonometric functions] The family $\mathcal
F_{\cos{}}=\left\{1,\cos(\pi x),\cos(2\pi x),\dotsc\right\}$ is an
$M$-system on $I=[0,1]$.
 \item[Characteristic function] The family $\mathcal
F_c=\left\{1,\exp(\imath\pi x),\exp( \imath 2\pi x),\dotsc\right\}$ is an
$M$-system on $I=[-1,1)$. The moments are the \textit{characteristic function} of
$\sigma$ at points $k\pi$, $k\in\N$. It yields
\[c_k(\sigma)=\displaystyle\int\nolimits_{-1}^1\exp(\imath k\pi
t)\d\sigma(t)=\varphi_{\sigma}(k\pi)\,.\]
In this case, the underlying scalar field is $\mathbb C$.
\item[Stieltjes transformation] The
family $\mathcal
F_s=\big\{\frac1{z_1-x},\frac1{z_2-x},\dotsc\big\}$, where none of the $z_k$'s belongs
to $[-1,1]$, is an $M$-system. The corresponding moments are the
\textit{Stieltjes
transformation}
$S_{\sigma}(z_k)$ of  $\sigma$, namely
\[c_k(\sigma)=\displaystyle\int\nolimits_{-1}^1\frac{\d\sigma(t)}{z_k-t}=S_{\sigma}(z_k)\,.\]
 \item[Laplace transform] The family $\mathcal
F_l=\left\{1,\exp(-x),\exp(-2x),\dotsc\right\}$ is an
$M$-system. The moments are the \textit{Laplace transform}
$\mathcal L\sigma$ at integer points, namely
\[c_k(\sigma)=\displaystyle\int\nolimits_{-1}^1\exp(-kt)\,\d\sigma(t)=\mathcal L\sigma(k)\,.\]
 \end{description}
A broad variety of common families can be considered in our framework. The above list is not meant
to be exhaustive.

Consider the family $\mathcal F_s=\big\{\frac1{z_0-x},\frac1{z_1-x},\dotsc\big\}$. Note that no
linear combination of its elements gives the constant function $1$. Thus the constant
function $1$ is not a generalized polynomial of this system. To treat such cases, we introduce
\textit{homogeneous} $M$-systems.
\subsubsection{Homogeneous $M$-systems}\label{homogeneous}
We say that a family $\mathcal F=\{u_0,u_1,\dotsc,u_n\}$ is a \textit{homogeneous} $M$-system
\textit{if and only if} it is an $M$-system and $u_0$ is a constant function. In this case, all
constant functions $c$, with $c\in\R$ (or $\mathbb C$), are generalized polynomials. Hence the
field
$\R$ (or $\mathbb C$) is naturally embedded in generalized polynomials. The adjective
homogeneous is named after this comment.

From any $M$-system we can always construct a homogeneous $M$-system.
Indeed, let $\mathcal F=\{u_0,u_1,\dotsc,u_n\}$ be an $M$-system. In particular the family
$\mathcal
F$
is a $T$-system of order $0$. Thus the continuous function $u_0$ does not vanish in $[-1,1]$. In
fact the family $\{1,\frac{u_1}{u_0},\frac{u_2}{u_0},\dotsc,\frac{u_n}{u_0}\}$ is a
homogeneous $M$-system.

All the previous examples of $M$-systems (see \ref{Msystems}) are homogeneous, even Stieltjes
transformation: \[\widetilde{\mathcal
F}_s=\Big\{1,\frac1{z_1-x},\frac1{z_2-x},\dotsc\Big\}\,.\]

\noindent Using homogeneous $M$-systems, we show that one
can exactly recover \textbf{all} nonnegative measures from a few generalized moments.

\subsection{An important theorem}
The following result is one of the main theorems of our paper. It states that the
generalized minimal extrapolation \eqref{support pursuit} recovers \textbf{all nonnegative
measures}
$\sigma$ whose support is of size $s$ from \textbf{only} $2s+1$ generalized moments.

\begin{theorem}\label{Exact reconstruction Theorem}
Let $\mathcal F$ be an homogeneous $M$-system on $I$. Consider a nonnegative measure $\sigma$ with
finite support included in $I$. \textbf{Then} the measure $\sigma$ is the \textbf{unique} solution
to generalized minimal extrapolation given  observation $\K_n(\sigma)$, where $n$ is not less than
twice the size
of the support of $\sigma$.
\end{theorem}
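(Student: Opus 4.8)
The plan is to reduce Theorem~\ref{Exact reconstruction Theorem} to Lemma~\ref{dual polynomial lemma} by exhibiting a suitable generalized dual polynomial. Write the support of $\sigma$ as $\S=\{x_1,\dotsc,x_s\}$ and take $\varepsilon_i=+1$ for all $i$ (since $\sigma$ is nonnegative, $\sgn(\sigma_i)=+1$). It suffices to construct a generalized polynomial $P=\sum_{k=0}^n a_k u_k$ with $n=2s$ satisfying: $(i)$ the generalized Vandermonde system at $x_1,\dotsc,x_s$ (with rows indexed by $u_0,\dotsc,u_{2s}$) has full column rank; $(ii)$ $P(x_i)=1$ for all $i$; and $(iii)$ $\abs{P(x)}<1$ for every $x\in[-1,1]\setminus\S$. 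Condition $(i)$ is immediate: since $\mathcal F$ is an $M$-system, it is in particular a $T$-system of degree $s-1$ (as $2s\geq s-1$), so the $s\times s$ minor formed by rows $u_0,\dotsc,u_{s-1}$ is already invertible; hence the full $(2s+1)\times s$ matrix has rank $s$. The heart of the matter is constructing $P$ meeting $(ii)$ and $(iii)$ with the prescribed bound on the number $n=2s$ of moments.

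The construction mimics the polynomial example in the Introduction, $P=1-c\prod_{i=1}^s(x-x_i)^2$, but must be carried out intrinsically in the $M$-system. The natural candidate is to set $P=1-Q$, where $Q$ is a nonnegative generalized polynomial in $\mathrm{Span}\{u_0,\dotsc,u_{2s}\}$ that vanishes exactly (and to second order) on $\S$. Such a $Q$ is produced by the standard quadrature/nonnegativity theory for $M$-systems: in a $T$-system of degree $2s$ on an interval there exist generalized polynomials that are nonnegative on $I$, vanish precisely at $s$ prescribed interior points, and are strictly positive elsewhere — this is the generalized polynomial analogue of $\prod(x-x_i)^2$, and it uses exactly $2s+1$ functions $u_0,\dotsc,u_{2s}$. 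Because $\mathcal F$ is \emph{homogeneous}, the constant $1$ lies in $\mathrm{Span}\{u_0,\dotsc,u_{2s}\}$, so $P=1-cQ$ is again a generalized polynomial in this span for any scalar $c$. Then $P(x_i)=1-cQ(x_i)=1$ gives $(ii)$; and for $x\in[-1,1]\setminus\S$ we have $Q(x)>0$, so $P(x)<1$, while choosing $c>0$ small enough (namely $c<2/\max_{[-1,1]}Q$) ensures $P(x)>-1$, giving $\abs{P(x)}<1$ and hence $(iii)$.

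I expect the main obstacle to be the rigorous construction of the nonnegative generalized polynomial $Q\in\mathrm{Span}\{u_0,\dotsc,u_{2s}\}$ vanishing to even order exactly on $\S$. For ordinary polynomials this is trivial, but for a general $M$-system one must invoke the structure theory of Chebyshev systems: the existence of ``nonnegative'' generalized polynomials with prescribed double zeros follows from the theory of principal representations and the fact that in a $T$-system of degree $2s$ a generalized polynomial with $s$ double zeros is determined up to a positive multiple and does not change sign. One must also verify that $Q$ is not identically zero and that its only zeros in $[-1,1]$ are the points of $\S$ (so that $(iii)$ is strict off $\S$), which again rests on the defining property that a nonzero generalized polynomial of degree $2s$ has at most $2s$ zeros counted with multiplicity — here we have exactly the $s$ double zeros, so no others. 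Once $Q$ is in hand, the rest is the short verification above, and the theorem follows from Lemma~\ref{dual polynomial lemma}.
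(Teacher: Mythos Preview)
Your proposal is correct and follows essentially the same route as the paper: reduce to Lemma~\ref{dual polynomial lemma}, then construct the dual polynomial as $P=1-cQ$ with $Q$ a nonnegative generalized polynomial vanishing exactly on $\S$, using homogeneity to ensure $1$ is itself a generalized polynomial. The paper packages the existence of $Q$ into Lemma~\ref{nonnegative} (existence of a nonnegative generalized polynomial with prescribed zero set iff $\mathrm{Index}(x_1,\dotsc,x_s)\leq n$, which here gives $\mathrm{Index}\leq 2s$), rather than arguing via double zeros and multiplicity as you do, but the content is the same.
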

\begin{proof}
 The complete proof can be found in \ref{proof Exact reconstruction Theorem} but some key points
from \textit{the theory of approximation} are presented in \ref{Nonnegative}. For further insights
about \textit{Markov systems}, we recommend the books \cite{MR0458081,MR0204922}.
\end{proof}
\noindent In addition, this result is sharp in the following sense. Every measure with support
size $s$ depends on $2s$ parameters ($s$ for its support and $s$ for its weights).
Surprisingly, this information can be recovered from {only} $2s+1$ of its generalized
moments. Furthermore the program \eqref{support pursuit} does not use the fact that the target is
\textbf{nonnegative}. It recovers $\sigma$ among \textbf{all} \textbf{signed} measures with
finite support.
\subsubsection{Nonnegative interpolation}\label{Nonnegative}
An important property of $M$-systems is the existence of a nonnegative generalized polynomial
that vanishes exactly at a prescribed set of points $\{t_1,\dotsc,t_m\}$, where $t_i\in I$ for all
$i=1,\dotsc,m$. Indeed, define \textit{the index} as
\begin{equation}\label{Index}
 \Index(t_1,\dotsc,t_m)=\sum_{j=1}^m\chi(t_j)\,,
\end{equation}
where $\chi(t)=2$ if $t$ belongs to $\mathring I$ (the interior of $I$) and $1$ otherwise. The
next lemma guarantees the existence of nonnegative generalized polynomials.
\begin{lemma}[Nonnegative generalized polynomial]\label{nonnegative} Consider an $M$-system
$\mathcal F$ and points $t_1,\dotsc,t_m$ in $I$. These points are the \textbf{only} zeros
of a nonnegative generalized polynomial of degree at most $n$ \textbf{if and only if}
$\mathrm{Index}(t_1,\dotsc,t_m)\leq n$.
\end{lemma}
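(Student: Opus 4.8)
The plan is to prove both directions by separately handling the "easy" necessity direction and the "hard" sufficiency direction, exploiting the fact that an $M$-system is a $T$-system of \emph{every} order $\le n$, which is precisely what allows zeros to be doubled up at interior points.

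For necessity, suppose $P=\sum_{l=0}^n a_l u_l$ is a nonnegative generalized polynomial of degree at most $n$ whose only zeros are $t_1,\dots,t_m$. At an interior zero $t_j\in\mathring I$, nonnegativity forces $P$ to have a local minimum there, so it cannot change sign; I would argue that such a zero must be "counted twice" in the sense that $P$ behaves like a generalized polynomial vanishing to even order. The clean way to make this rigorous without differentiability assumptions is a sign-change/Rolle-type argument: if $t_j$ is an interior zero and were a simple zero, one could perturb to produce a nearby generalized polynomial (still of degree $\le n$) with strictly more sign changes than its degree allows, contradicting the $T$-system property at the relevant order. Counting: each interior zero contributes $2$ to the number of "forced" zeros, each endpoint zero contributes $1$, and a generalized polynomial of degree $\le n$ in a $T$-system has at most $n$ zeros counted with these multiplicities; hence $\Index(t_1,\dots,t_m)=\sum_j\chi(t_j)\le n$.

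For sufficiency — the main obstacle — assume $\Index(t_1,\dots,t_m)\le n$ and construct the desired $P$. Write $N:=\Index(t_1,\dots,t_m)\le n$. First I would reduce to the case $N=n$ (or $N=n$ or $n-1$ depending on parity) by noting that if $N<n$ we may add an extra interior node or shift; more precisely, the honest approach is: since $\mathcal F$ is a $T$-system of order $N$, pick the sub-$T$-system $\{u_0,\dots,u_N\}$ and build $P$ there. Using the interpolation characterization of $T$-systems (the generalized Vandermonde has full rank), one can find a generalized polynomial $Q$ of degree $N$ that vanishes at the $t_j$'s with the prescribed multiplicities ($1$ at endpoints, $2$ at interior points) and is positive at one reference point. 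The key classical fact from the theory of Markov/Chebyshev systems — which I would invoke, citing \cite{MR0458081,MR0204922} — is that a generalized polynomial of exact degree $N$ in a $T$-system having its full quota of $N$ zeros (counted with the interior-doubling convention) does not change sign between consecutive distinct zeros, because any sign change would be an extra zero; combined with the double zeros at interior $t_j$, this pins down $Q$ to be of one constant sign on all of $I$, and after multiplying by $\pm1$ we get $Q\ge 0$ with zero set exactly $\{t_1,\dots,t_m\}$. Finally, re-embed $Q$ as an element of $\mathrm{Span}\{u_0,\dots,u_n\}$ by taking coefficients $a_l=0$ for $l>N$; since $u_0$ need not be constant here (we are in a general $M$-system, not necessarily homogeneous) no normalization is needed, and $P:=Q$ works.

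The delicate point I expect to wrestle with is the multiplicity bookkeeping: making precise what "double zero at an interior point" means for a generalized polynomial that is only assumed continuous, and verifying that the $T$-system axioms (stated in the excerpt only in terms of counting distinct zeros) really do yield the stronger statement that $\le k$ zeros-with-multiplicity hold, where an interior even-order zero counts twice. I would handle this via a limiting argument: approximate the configuration with $t_j\pm\delta$ pairs of distinct simple zeros, apply the distinct-zeros bound of the $T$-system of the appropriate order, and pass to the limit $\delta\to0$, using continuity of the generalized polynomials and compactness of the coefficient sphere to extract a convergent subsequence. This simultaneously gives necessity (the bound cannot be exceeded) and the rigidity needed for sufficiency (the limiting polynomial inherits the non-sign-changing property, forcing nonnegativity up to sign).
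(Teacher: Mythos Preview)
The paper does not give its own proof of this lemma: immediately after the statement it simply writes ``A proof of this lemma is in \cite{MR0458081}'' and moves on. So there is no argument in the paper to compare against; the authors treat this as a classical fact from the Karlin--Studden theory of $T$-systems and defer entirely to that reference.

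Your sketch is a reasonable outline of how that classical proof actually goes. The necessity direction via counting non-nodal (interior, non-sign-changing) zeros with weight $2$, and the sufficiency direction via a limiting argument that splits each interior node $t_j$ into a pair $t_j\pm\delta$, builds the interpolant in the $T$-system of order $N=\Index(t_1,\dots,t_m)$, and passes to the limit using compactness of the coefficient sphere, are precisely the standard moves in \cite{MR0458081} (and in Krein--Nudel'man \cite{MR0204922}). The one place where your plan is still genuinely soft is the multiplicity bookkeeping you yourself flag: the excerpt's definition of a $T$-system only bounds the number of \emph{distinct} zeros, and upgrading this to the statement that interior non-nodal zeros count twice requires exactly the perturbation-and-limit argument you describe, carried out carefully (in particular, you must check that the limiting polynomial is not identically zero, which is where normalizing on the coefficient sphere is used). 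None of this is wrong, but be aware that in the paper's logic this lemma is an imported black box, not something the authors re-derive.
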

\noindent A proof of this lemma is in \cite{MR0458081}. Note that this lemma
holds
for \textbf{all} $M$-systems. However our main theorem needs a\textit{ homogeneous} $M$-system.

\subsubsection{Is \textbf{homogeneous} necessary?}\label{counter_example}
If one considers non-homogeneous $M$-systems then it is possible to give counterexamples that go
against Theorem \ref{Exact reconstruction Theorem} for all $n\geq2s$. Indeed, we have the next
result.
\begin{proposition}\label{Proposition Counter Example}
 Let $\sigma$ be a nonnegative measure supported by $s$ points. Let $n$ be an integer such
that
$n\geq2s$. Then there exists an $M$-system $\mathcal F$ and a measure $\mu\in\M$ such that
$\K_n(\sigma)=\K_n(\mu)$ and $\normTV\mu<\normTV\sigma$.
\end{proposition}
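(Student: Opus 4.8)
The plan is to exploit the fact that, for a \emph{non-homogeneous} $M$-system, minimizing $\normTV{\,\cdot\,}$ under the generalized moment constraints is the same as minimizing a \emph{weighted} total variation under \emph{standard} moment constraints, and the latter minimum need not be attained at $\sigma$. Concretely, I would fix a continuous weight $w\colon I\to(0,+\infty)$ and set
\[
\mathcal F_w=\{w,\ wx,\ wx^2,\ \dots,\ wx^n\}\,.
\]
Since $w>0$ on $I$, any generalized polynomial $\sum_{l=0}^{k}a_l\,wx^l=w(x)q(x)$ vanishes exactly where the ordinary polynomial $q$ of degree $\le k$ does, hence has at most $k$ zeros in $I$; so $\mathcal F_w$ is an $M$-system of degree $n$, and it is non-homogeneous whenever $w$ is non-constant. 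For $\mu\in\M$ the generalized moments with respect to $\mathcal F_w$ are $c_k(\mu)=\int x^k\,\d(w\mu)$, i.e.\ the standard moments of $w\mu$, while $\normTV{\mu}=\int w^{-1}\,\d|w\mu|$. Hence, writing $\nu:=w\sigma$ (a nonnegative measure supported on the same $s$ points as $\sigma$), it suffices to produce a finite signed measure $\rho$ with $\int x^k\,\d\rho=\int x^k\,\d\nu$ for $k=0,\dots,n$ and $\int w^{-1}\,\d|\rho|<\int w^{-1}\,\d|\nu|=\normTV{\sigma}$; then $\mu:=w^{-1}\rho$ answers the proposition.

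Next I would build $\rho$ as a small perturbation of $\nu$. Label the support of $\sigma$ as $x_1<\dots<x_s$, choose $n+2-s$ further points of $I$, distinct from one another and from $\S$, and let $y_0,\dots,y_{n+1}$ denote the resulting $n+2$ points. The $(n+1)\times(n+2)$ Vandermonde matrix $(y_j^k)$ has rank $n+1$, so its kernel is a line; moreover every kernel vector has all coordinates nonzero, since a vanishing coordinate would force, via an invertible $(n+1)\times(n+1)$ Vandermonde, all the others to vanish. Let $\tau=\sum_j \tau(\{y_j\})\,\delta_{y_j}$ span this kernel, scaled so that the mass $\tau(\{x_1\})$ at the point $x_1$ is negative; then $\int x^k\,\d\tau=0$ for $k\le n$. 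Finally pick $w$ continuous and positive with $w\equiv1$ at every node $y_j$ except $x_1$, and $w(x_1)=\delta$ for a small $\delta>0$ to be fixed last; such a $w$ exists by an elementary bump construction and is non-constant. Set $\rho_\varepsilon:=\nu+\varepsilon\tau$ for $\varepsilon>0$.

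For $\varepsilon$ small the masses of $\rho_\varepsilon$ at $x_1,\dots,x_s$ remain positive, whence
\[
\int w^{-1}\,\d|\rho_\varepsilon|=\normTV{\sigma}+\varepsilon\,\Xi,
\qquad
\Xi=\sum_{i=1}^{s}\frac{\tau(\{x_i\})}{w(x_i)}+\sum_{y_j\notin\S}\frac{|\tau(\{y_j\})|}{w(y_j)}
=\frac{\tau(\{x_1\})}{\delta}+\bigl(\text{a quantity independent of }\delta\bigr)\,.
\]
Choosing $\delta$ small enough forces $\Xi<0$, and then for $\varepsilon$ small the measure $\rho:=\rho_\varepsilon$ satisfies $\int w^{-1}\,\d|\rho|<\normTV{\sigma}$, so $\mu=w^{-1}\rho$ has $\K_n(\mu)=\K_n(\sigma)$ and $\normTV{\mu}<\normTV{\sigma}$. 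I expect the only delicate points to be the bookkeeping that the perturbation strictly lowers the weighted norm — this is where the positivity of the $\sigma_i$, the freedom in $\delta$, and the fact that no coordinate of the Vandermonde kernel vector vanishes all enter — together with checking carefully that $\mathcal F_w$ is genuinely an $M$-system; the hypothesis $n\ge 2s$ leaves ample room to place the auxiliary nodes, and in fact the construction would survive for considerably smaller $n$.
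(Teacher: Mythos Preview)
Your argument is correct. Both you and the paper use the same class of $M$-systems, namely $\mathcal F=\{w,\,wu_1,\dots,\,wu_n\}$ for a positive continuous weight $w$ (you take $u_k=x^k$); in both cases matching the $\mathcal F$-moments reduces to matching ordinary moments of the weighted measures, while the total variation becomes a $w^{-1}$-weighted mass. The difference lies in how the competing measure is built. The paper places $n+1$ nodes $t_1,\dots,t_{n+1}$ \emph{disjoint from} $\S$, solves the square Vandermonde system to get $\nu$ with the same standard moments as $\sigma$, then sets $w\equiv r:=\normTV\sigma/(\normTV\nu+1)$ on $\S$ and $w\equiv 1$ on $\{t_i\}$ and takes $\mu=r\nu$; a one-line computation gives $\normTV\mu=\normTV\sigma\cdot\normTV\nu/(\normTV\nu+1)<\normTV\sigma$. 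Your route instead keeps $\S$ among the nodes, finds a kernel vector $\tau$ of the $(n{+}1)\times(n{+}2)$ Vandermonde, and perturbs $\nu=w\sigma$ by $\varepsilon\tau$, tuning $w(x_1)=\delta$ so that the first-order variation $\Xi$ is negative. The paper's construction is slightly cleaner (no two-parameter limiting argument and no need to track positivity of the perturbed masses), while yours makes transparent exactly where homogeneity fails: the weighted subgradient at $\sigma$ can be pushed in a favourable direction by shrinking $w$ at a single support point. Your remark that $n\ge 2s$ is not really used is also borne out by the paper's proof, which only needs $n+1$ auxiliary nodes and an invertible Vandermonde.
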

\begin{proof}
 See \ref{proof Proposition Counter Example}.
\end{proof}
Theorem \ref{Exact reconstruction Theorem} gives us the opportunity to build a large family of
\textit{deterministic} matrices for compressed sensing in the case of nonnegative signals.
\subsection{Deterministic matrices for compressed sensing}\label{DeterministicMatrices}
The heart of this article lies in the next theorem. It gives \textbf{deterministic} matrices for
\textit{compressed sensing}. We begin with some \textit{state-of-the-art} results in compressed
sensing. In the following, $p$ denotes the number of predictors (or, from a signal processing view
point, the length of the signal).
\begin{description}
 \item [Deterministic Design] As far as we know, for
\[
 n=\operatorname{\mathcal O}\displaylimits_{p,\,s\to\infty}\Big( s\,\log\Big(\frac p
s\Big)\Big)\,,
\]
there exists \cite{Ber} a {deterministic} matrix $A\in\R^{n\times p}$ such that basis pursuit
\eqref{BasisPursuit} recovers {all} $s$-sparse vectors from the observation $Ax_0$.
\item[Random Design]{If}
\[
 n\geq C\, s\,\log\Big(\frac p s\Big),
\]
where $C>0$ is a universal constant, then there exists (with high probability) a {random} matrix
$A\in\R^{n\times p}$ such that basis pursuit recovers {all} $s$-sparse vectors from the
observation $Ax_0$.
\end{description}
The deterministic result holds for \textit{large} values of $s$, $n$ and $p$. For readability we
do not specify the sense of \textit{large} here. The reader may
find an abundant literature in the respective references (see for example \cite{Ber,MR2241189}).

Considering nonnegative sparse vectors, it is possible to drop the
bound on $n$ to \[n\geq2s+1\,.\] Unlike
the above examples, this result holds for \textit{all} values of the parameters (as soon as
$n\geq2s+1$). In addition it give \textbf{explicit} design
matrices for basis pursuit. Last but not least, this bound
on $n$ does not depend on $p$. In special cases, this result has been previously developed in
\cite{donoho1992maximum,fuchs1996linear,donoho2005sparse,donoho2010counting}. Using Theorem
\ref{Exact reconstruction Theorem}, it is possible to provide a generalization of this result to a
broad range of measurement matrices:

\begin{theorem}[Deterministic Design Matrices]\label{Theorem Deterministic}
 Let $n,p,s$ be integers such that \[s \leq\min(n/2,\,p).\]
Let $\{1,u_1,\dotsc,u_n\}$ be
a homogeneous $M$-system on $I$. Let $t_1,\dotsc,t_p$ be distinct reals
of $I$. Let $A$ be generalized Vandermonde system defined by
\[A=
\begin{pmatrix}
    1 & 1 & \dotsc & 1\\
    u_1(t_1) & u_1(t_2) & \dotsc & u_1(t_p)\\
    u_2(t_1) & u_2(t_2) & \dotsc & u_2(t_p)\\
    \vdots & \vdots &  & \vdots\\
    u_n(t_1) & u_n(t_2) & \dotsc & u_n(t_p)
  \end{pmatrix}
\,.
\]
\textbf{Then} basis pursuit \eqref{BasisPursuit} exactly recovers \textbf{all nonnegative}
$s$-sparse vectors $x_0\in\R^p$ from the observation $Ax_0$.
\end{theorem}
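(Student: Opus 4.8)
The plan is to derive the theorem as a corollary of Theorem \ref{Exact reconstruction Theorem} by translating the finite-dimensional problem \eqref{BasisPursuit} into a generalized minimal extrapolation problem over the space $\M$ of signed measures. To a vector $y\in\R^p$ associate the discrete measure $\nu_y:=\sum_{j=1}^p y_j\,\delta_{t_j}$. Since the nodes $t_1,\dots,t_p$ are distinct, the map $y\mapsto\nu_y$ is injective and satisfies $\normTV{\nu_y}=\norm{y}_1$. Moreover, because $u_0\equiv1$ and by the definition \eqref{generalized moments} of generalized moments, the $k$-th coordinate of $Ay$ equals $\sum_{j=1}^p y_j\,u_k(t_j)=\int_I u_k\,\d\nu_y=c_k(\nu_y)$, so that $Ay=\K_n(\nu_y)$ for every $y\in\R^p$.

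Next I would fix a nonnegative $s$-sparse vector $x_0\in\R^p$ and set $\sigma:=\nu_{x_0}$. Then $\sigma$ is a nonnegative measure whose support is included in $I$ and has size at most $s$; since $s\leq n/2$, the integer $n$ is not less than twice the size of the support of $\sigma$. As $\{1,u_1,\dots,u_n\}$ is a homogeneous $M$-system, Theorem \ref{Exact reconstruction Theorem} applies and asserts that $\sigma$ is the \emph{unique} solution of \eqref{support pursuit} given the observation $\K_n(\sigma)=Ax_0$; in particular $\sigma$ is the unique minimizer of $\normTV{\,\cdot\,}$ over \emph{all} $\mu\in\M$ subject to $\K_n(\mu)=\K_n(\sigma)$.

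Finally I would transfer this uniqueness back to $\R^p$. Let $y\in\R^p$ be any point feasible for \eqref{BasisPursuit}, i.e.\ $Ay=Ax_0$ (note that $y$ is \emph{not} required to be nonnegative, which is exactly why the full generality of Theorem \ref{Exact reconstruction Theorem} over signed measures is needed). Then $\nu_y$ is feasible for \eqref{support pursuit} with data $\K_n(\sigma)$, hence $\normTV{\nu_y}\geq\normTV{\sigma}$, that is $\norm{y}_1\geq\norm{x_0}_1$; and equality forces $\nu_y=\sigma$ by uniqueness, hence $y=x_0$ by injectivity of $y\mapsto\nu_y$. Therefore $x_0$ is the unique solution of basis pursuit, which proves the claim.

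The argument is essentially a change of variables, so I do not expect a genuine obstacle; the only points requiring care are the bookkeeping that makes Theorem \ref{Exact reconstruction Theorem} applicable — namely that the (possibly smaller than $s$) support of $\sigma$ still satisfies the hypothesis ``$n$ at least twice the support size'', which is immediate from $s\leq n/2$ — and the remark that condition $(i)$ of Lemma \ref{dual polynomial lemma}, full column rank of the generalized Vandermonde matrix evaluated at the support points, is automatic here since an $M$-system is a $T$-system of every order at most $n$. This last fact is already incorporated into the proof of Theorem \ref{Exact reconstruction Theorem}, so it need not be re-established.
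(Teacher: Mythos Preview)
Your proof is correct and follows essentially the same route as the paper's: you introduce the isometry $y\mapsto\sum_j y_j\delta_{t_j}$ between $(\R^p,\ell_1)$ and discrete measures supported on $\{t_1,\dots,t_p\}$, observe that $Ay=\K_n(\nu_y)$, apply Theorem~\ref{Exact reconstruction Theorem} to $\sigma=\nu_{x_0}$, and pull the uniqueness back to $\R^p$. The paper's argument is identical in structure; your additional remarks on why signed measures are needed and on the bookkeeping for support size are accurate and already implicit there.
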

\begin{proof}
See \ref{proof Theorem Deterministic}.
\end{proof}
\begin{remark}
 The purely analytical components of this result are tractable back to the theory of neighborly
polytopes (see for instance \cite{donoho2005sparse}) and in some sense trace to the theory of
moment problems which essentially follows from Carath\'eodory work
\cite{caratheodory1907variabilitetsbereich,caratheodory1911variabilitetsbereich}. Other relevant
work includes \cite{MR0059329,derry1956convex,Sturmfels:1988:TPM}. This list is not meant to be
exhaustive.
\end{remark}

\noindent Although the predictors could be highly correlated, basis pursuit
exactly recovers the target vector $x_0$. Of course, this result is theoretical. In practice, the
sensing matrix $A$ can be very ill-conditioned. In this case, basis pursuit behaves poorly.%

\subsubsection*{Numerical experiments}
Our numerical experiments illustrate Theorem \ref{Theorem Deterministic}. They are of the
following form:
\begin{enumerate}[$(a)$]
 \item Choose constants $s$ (sparsity), $n$ (number of known moments), and $p$ (length
of the vector). Choose the family $\mathcal F$ (cosine, polynomial, Laplace, Stieltjes,...).
\item Select the subset $\mathcal S$ (of size $s$) uniformly at random.
\item Randomly generate an $s$-sparse vector $x_0$ of support $\S$ whose nonzero entries have the
chi-square distribution with $1$ degree of
freedom.
\item Compute the observation $Ax_0$.
 \item Solve \eqref{BasisPursuit}, and compare with the target vector $x_0$.
\end{enumerate}
\medskip

\begin{figure}[ht]
\centering
\includegraphics[height =5.5cm]{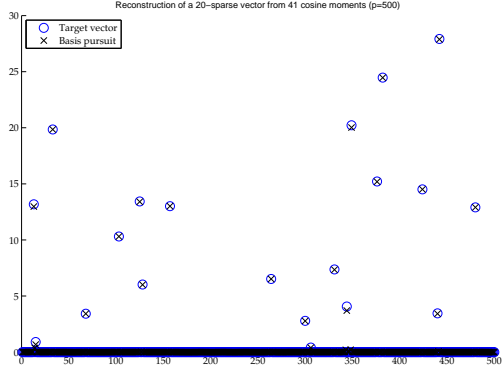}
\caption{Consider the family $\mathcal
F_{\cos{}}=\left\{1,\cos(\pi x),\cos(2\pi x),\dotsc\right\}$ on $I=[0,1]$ and the points
$t_k=k/501$, for $k=1,\dotsc,500$. The blue circles represent the target vector $x_0$ (a
$20$-sparse vector), while the black crosses represent the solution $x^\star$ of
\eqref{BasisPursuit} from the observation of $41$ cosine moments. In this example $s=20$, $n=41$,
and $p=500$. More numerical
results can be found in Appendix
\ref{Num}. This example shows that the reconstruction is excellent.}\label{ExpCos}
 \end{figure}

\noindent The program \eqref{BasisPursuit} can be recast as a linear program (see
\cite{MR1854649} for instance). Then we use an interior point method to solve
\eqref{BasisPursuit}.
\medskip

\noindent The entries of the target signal are distributed according to chi-square
distribution with $1$ degree of freedom. We chose this distribution to ensure that the entries are
nonnegative. Let us emphasize that the actual values of $x_0$ can be arbitrary; only the sign
matters. The result remains the same if we take the nonzero entries to be $1$,
say.
\medskip

\noindent
Let us denote $K:t\mapsto(1,u_1(t),\dotsc,u_n(t))$. The
columns of $A$ are the values of this map at points $t_1,\dotsc,t_p$. For large $p$, the
vectors $K(t_i)$ can be highly correlated. In fact, the
matrix $A$ can be \textit{ill-conditioned}. To avoid such
a case, we chose a family such that the map $K$ has a large derivative. It
appears that the \textit{cosine family} gives very good numerical results (see
Figure \ref{ExpCos}).
\medskip

\noindent We investigate the reconstruction error between the numerical result $\tilde x$ of the
program \eqref{BasisPursuit} and the target vector $x_0$. Our experiment is of the following form:
\begin{enumerate}[$(a)$]
 \item Choose $p$ (length of the vector) and $N$ (number of numerical experiments).
  \item Let $s$ satisfy $1\leq s\leq (p-1)/2$.
   \item Set $n=2s+1$ and solve the program \eqref{BasisPursuit}. Let $\tilde x$ be the numerical
result.
  \item Compute the $\ell_1$-error $\norm{\tilde x-x_0}_1/p$.
\item Repeat $N$ times the steps $(c)$ and $(d)$, and compute $\mathrm{Err}_s$, the arithmetic mean
of the $\ell_1$-errors.
\item Return $\norm{\mathrm{Err}_s}_\infty$, the maximal value of $\mathrm{Err}_s$.
\end{enumerate}
For $p=100$ and $N=10$, we find that
\[\norm{\mathrm{Err}_s}_\infty\leq0.05\,.\]
Note that all experiments were done for $n=2s+1$. This is the smallest value of $n$
such that Theorem \ref{DeterministicMatrices} holds.
\section{Exact reconstruction for generalized Chebyshev measures}\label{Tigre}
In this section we give some examples of extremal polynomials $P$ as they appear in Definition
\ref{DEF TYPE}. Considering $M$-systems, corollary of Lemma \ref{dual polynomial lemma} shows
that every measure with Jordan support included in $\big(\Eplus{P},\Eminus{P}\big)$ is the
\textbf{only} solution to \eqref{support pursuit}. Indeed, condition $(i)$ of Lemma \ref{dual
polynomial lemma} is clearly satisfied when the underlying family $\mathcal F$ is an $M$-system.

\subsection{Trigonometric families}
In the context of $M$-systems we can exhibit some very particular dual polynomials. The global
extrema of these polynomials gives families of support for which results of Lemma \ref{dual
polynomial lemma} hold.
\subsubsection*{The cosine family}
First, consider the $(n+1)$-dimensional cosine
system \[\mathcal F_{\cos{}}^n:=\left\{1,\cos(\pi x),\dotsc,\cos(n\pi x)\right\}\] on
$I=[0,1]$. Obviously, extremal polynomials \[P_k(x)=\cos(k\pi x),\] for $k=1,\dotsc,n$,
satisfy $\norm{P_k}_\infty\leq 1$ and $P_k(l/k)=(-1)^l$, for $l=0,1,\dotsc, (k-1)$. According to
Definition \ref{DEF TYPE}, let us denote
\begin{itemize}
 \item $\Eplus{P_k}:=\big\{2l/k\ |\ l=0,\dotsc,\lfloor\frac {k-1}2\rfloor\big\}$,
  \item $\Eminus{P_k}:=\big\{(2l-1)/k\ |\ l=1,\dotsc,\lfloor\frac k2\rfloor\big\}$.
\end{itemize}
The corollary that follows Lemma \ref{dual polynomial lemma} asserts the following result.
\medskip

\noindent \textit{Consider a signed measure $\sigma$ having Jordan support $(\mathcal S^+,\mathcal
S^-)$ such that
$\S^+\subset \Eplus{P_k}$ and $\S^-\subset \Eminus{P_k}$, for some $1\leq k\leq n$. \textbf{Then}
the measure $\sigma$ can be exactly reconstructed from the observation of
\begin{equation}\label{cos obs}
 \int_0^1\cos(k\pi t)\d \sigma(t),\quad k=0,1,\dotsc,n.
\end{equation}
Moreover, since the family  $\mathcal F_{\cos{}}^n$ is an $M$-system, condition $(i)$ in Lemma
\ref{dual polynomial lemma} is satisfied. Hence, the measure $\sigma$ is the only solution of
\eqref{support pursuit} given the observations \eqref{cos obs}.}
\medskip

\noindent Using the classical mapping
\[
\Psi:\bigg\{
\begin{array}{ccc}
 [0,1] & \to & [-1,1]\\
 x & \mapsto & \cos(\pi x)
\end{array}
\,,
\]
the system of function $(1,\cos(\pi x),\dotsc,\cos(n\pi x))$ can be push-forward to the system
of functions
$(1,T_1(x),\dotsc, T_n(x))$, where $T_k(x)$ is the so-called\textit{ Chebyshev
polynomial of the first kind} of order $k$, $k=1,\dotsc,n$ (see \ref{Cheby Sub}).
\subsubsection*{The characteristic function}
By the same token, consider the complex valued $M$-system defined by
\[\mathcal F_c^n=\left\{1,\exp(\imath\pi x),\dotsc,\exp( \imath n\pi x)\right\}\] on $I=[0,2)$. In
this case, one can check that
\[P_{\alpha,k}(t)=\cos(k\pi(t-\alpha)),\quad\forall t\in[0,2),\]
where $\alpha\in\R$ and $0\leq k\leq n/2$, is a generalized polynomial. Following the previous
example, we set
\begin{itemize}
 \item $\Eplus{P_{\alpha,k}}:=\big\{\alpha+2l/k\ (\mathrm{mod}\ 2)\ |\ l=0,\dotsc,\lfloor\frac
{k-1}2\rfloor\big\}$,
  \item $\Eminus{P_{\alpha,k}}:=\big\{\alpha +(2l-1)/k\ (\mathrm{mod}\ 2)\ |\
l=1,\dotsc,\lfloor\frac k2\rfloor\big\}$.
\end{itemize}
Hence Lemma \ref{dual polynomial lemma} can be applied. It yields the following:
\medskip

\noindent \textit{Any signed measure having
Jordan support included in $\big(\Eplus{P_{\alpha,k}},\Eminus{P_{\alpha,k}}\big)$,
for some $\alpha\in\R$ and $1\leq k\leq n/2$, is the \textbf{unique} solution of \eqref{support
pursuit} given the observation
\[\displaystyle\int\nolimits_{0}^2\exp(\imath k\pi
t)\d\sigma(t)=\varphi_{\sigma}(k\pi),\quad\forall k=0,\dotsc,n\,,\]
where $\varphi_\sigma(k\pi)$ has been defined in the previous section $($see \ref{Msystems}$)$.}

\noindent Note that the study of basis pursuit with this kind of trigonometric moments has been
considered in the pioneering work of Donoho and Stark \cite{MR997928}.
\subsection{Chebyshev polynomials}\label{Cheby Sub}
As mentioned in the introduction, the $k$-th
\textit{Chebyshev polynomial of the first order} is
defined by
\[
 T_k(x)=\cos(k\arccos(x)),\quad\forall x\in[-1,1]\,.
\]
We give some well known properties of Chebyshev polynomials. The $k$-th Chebyshev polynomial
satisfies the \textit{equioscillation
property} on $[-1, 1]$. In fact, there exist $k+1$ points $\zeta_i= \cos(\pi i/k)$
with $1=\zeta_0>\zeta_1>\dotsb>\zeta_k=-1$ such that
\[T_k(\zeta_i)=(-1)^i\norm{T_k}_\infty=(-1)^i\,,\]
where the supremum norm is taken over $[-1,1]$. Moreover, the Chebyshev polynomial $T_k$ satisfies
the
following \textit{extremal property}.
\begin{theorem}[\cite{MR1060735,MR1367960}] We have
\[\min_{p\in\mathcal
P_{k-1}^{\mathbb{C}}}\big\lVert{x^k-p(x)}\big\lVert_\infty=\big\lVert{2^{1-k}T_k}
\big\lVert_\infty=2^ { 1-k } \,,\]
 where $\mathcal P_{k-1}^{\mathbb{C}}$ denotes the set of complex polynomials of degree less
than $k-1$, and the supremum norm is taken over $[-1,1]$. Moreover, the minimum is uniquely
attained
by $p(x)=x^k-2^{1-k}T_k(x)$.
\end{theorem}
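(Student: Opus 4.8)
The plan is to prove the two assertions — the value of the minimum and the uniqueness of the minimizer — by the classical equioscillation / alternation argument. First I would reduce the problem to showing that $q(x) := 2^{1-k}T_k(x)$ is the (unique) best approximation of $x^k$ by complex polynomials of degree $\le k-1$ in the sup norm on $[-1,1]$; this is equivalent to saying that $x^k - q(x)$ has minimal sup norm among all monic polynomials of degree $k$, since subtracting a degree-$\le k-1$ polynomial from $x^k$ exactly sweeps out all monic degree-$k$ polynomials. Note $x^k - q(x) = x^k - 2^{1-k}T_k(x)$ has degree $\le k-1$ because the leading coefficient of $T_k$ is $2^{k-1}$, so this is indeed an admissible $p$, and its error is $\|2^{1-k}T_k\|_\infty = 2^{1-k}$ by the equioscillation property already recorded in the excerpt ($T_k(\zeta_i) = (-1)^i$ at the $k+1$ points $\zeta_i = \cos(\pi i /k)$).

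Next I would establish the lower bound $\min \ge 2^{1-k}$ by contradiction. Suppose some monic degree-$k$ polynomial $r$ satisfies $\|r\|_\infty < 2^{1-k}$. Consider the difference $D = 2^{1-k}T_k - r$. At each alternation point $\zeta_i$ we have $2^{1-k}T_k(\zeta_i) = (-1)^i 2^{1-k}$, which has absolute value strictly larger than $|r(\zeta_i)|$, so $\sgn D(\zeta_i) = (-1)^i$ (after taking real parts, since the $\zeta_i$ and the target are real — one works with $\operatorname{Re} r$, whose error is no larger than that of $r$, to reduce to the real case). Thus $D$ changes sign on each of the $k$ intervals $(\zeta_{i},\zeta_{i-1})$, giving $D$ at least $k$ distinct real zeros in $[-1,1]$; but $D = 2^{1-k}T_k - r$ is a difference of two monic degree-$k$ polynomials, hence has degree $\le k-1$, forcing $D \equiv 0$ and therefore $r = 2^{1-k}T_k$, contradicting $\|r\|_\infty < 2^{1-k} = \|2^{1-k}T_k\|_\infty$. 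This proves both that the minimum equals $2^{1-k}$ and, by the same $D \equiv 0$ conclusion applied to any $r$ achieving equality, that the minimizer is unique, which translates back to $p(x) = x^k - 2^{1-k}T_k(x)$ being the unique optimal polynomial.

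The main obstacle — really the only subtlety beyond bookkeeping — is handling the passage to complex coefficients cleanly: the alternation argument is intrinsically real, so one must first observe that for a real target function $x^k$ on a real interval, $\|x^k - \operatorname{Re} p\|_\infty \le \|x^k - p\|_\infty$, so it suffices to minimize over real polynomials; then the sign-change counting argument goes through verbatim. A secondary point to state carefully is why $D$ has degree at most $k-1$ (cancellation of the leading $x^k$ terms) and why $k$ sign changes force $k$ zeros even if some $\zeta_i$ happens to be a zero of $D$ — the standard remedy is to count zeros with the convention that a sign change on a closed subinterval produces a zero in its interior, and note that consecutive intervals share only an endpoint, so one still gets $k$ distinct zeros, or else $D$ vanishes at an interior alternation point which already lowers the count bound enough to conclude $D\equiv 0$. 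I would present this as a short remark rather than belabor it.
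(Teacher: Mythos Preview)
Your argument is correct and is the classical equioscillation proof. Note, however, that the paper does not actually prove this theorem: it is stated with a citation to \cite{MR1060735,MR1367960} and used as background, so there is no in-paper proof to compare against. Your write-up would serve as a self-contained justification; the reduction to real coefficients via $\operatorname{Re} p$ and the zero-counting for $D = 2^{1-k}T_k - r$ are exactly the standard steps, and your remarks on the uniqueness case (weak sign alternation at the $\zeta_i$ still forcing $k$ zeros of a degree-$\le k-1$ polynomial) are the right things to flag.
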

\noindent These two properties, namely the equioscillation property and the extremal property, will
be useful to us when we define generalized Chebyshev polynomial.

Using Lemma \ref{dual polynomial lemma} we uncover an exact reconstruction
result. Consider the family \[\mathcal F_p^n=\{1,x,x^2,\dotsc,x^n\}\] on $I=[-1,1]$. Set
\begin{itemize}
 \item  $\Eplus{T_k}=\big\{\cos(2l\pi/k),\
l=0,\dotsc,\big\lfloor\frac
k2\big\rfloor\big\}$,
  \item $\Eminus{T_k}=\big\{\cos((2l+1)\pi/k),\
l=0,\dotsc,\big\lfloor\frac k2\big\rfloor\big\}$.
\end{itemize}
The following result holds:
\medskip

\noindent \textit{Consider a signed measure $\sigma$ having Jordan support included in
$\big(\Eplus{T_k},\Eminus{T_k}\big)$, for some $1\leq k\leq n$. Then the measure $\sigma$ is the
\textbf{only} solution to \eqref{support pursuit} given its
first $(n+1)$ standard moments.}
\medskip

\noindent Note that this result is restrictive in the location of the support points, they are not
sparse in the usual sense, because they must be precisely located. Nevertheless, it can
be extended to any $M$-systems with the help of \textit{generalized
Chebyshev polynomials}.

\subsection{Generalized Chebyshev polynomials}
Following \cite{MR1367960}, we define generalized Chebyshev polynomials as follows. Let
$\mathcal F=\{u_0,u_1,\dotsc,u_n\}$ be an $M$-system on $I$.
\subsubsection{Definition}\label{Def Cheby}
The \textit{generalized Chebyshev polynomial}
\[\Che_k:=\Che_k\{u_0,u_1,\dotsc,u_n;I\}\,,\]
 where $1\leq k\leq n$, is defined by the following three properties:
\begin{itemize}
 \item $\Che_k$ is a generalized polynomial of degree $k$, i.e.
$\Che_k\in\mathrm{Span}\{u_0,u_1,\dotsc,u_k\}$,
  \item there exists $x_0<x_1<\dotsb<x_k$ such that
\begin{equation}\label{Che Altern}
 \sgn(\Che_k(x_{i+1}))=-\sgn(\Che_k(x_{i}))=\pm\norm{\Che_k}_\infty\,,
\end{equation}
for $i=0,1,\dotsc,k-1$,
\item and
\begin{equation}\label{Che unique}
 \norm{\Che_k}_\infty=1\quad\mathrm{with}\quad \Che_k( \max I)>0\,.
\end{equation}
\end{itemize}

\noindent The existence and the uniqueness of such $\Che_k$ is proved in \cite{MR1367960}.
Moreover, the following theorem shows that the extremal property implies the equioscillation
property \eqref{Che Altern}.
\begin{theorem}[\cite{MR1060735,MR1367960}]
 The $k$-th generalized Chebyshev polynomial $\Che_k$ exists and can be written as
\[\Che_k=c\,\Big(u_k-\sum_{i=0}^{k-1}a_i u_i\Big)\,,\]
where $a_0,a_1,\dotsc,a_{k-1}\in\R$ are chosen to minimize
\[\bigg\lVert{u_k-\sum_{i=0}^{k-1}a_i u_i}\bigg\lVert_\infty\,,\]
and the normalization constant $c\in\R$ can be chosen so that $\Che_k$ satisfies property
\eqref{Che unique}.
\end{theorem}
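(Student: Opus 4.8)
The plan is to recognise $\Che_k$ as a normalised best uniform approximation error of $u_k$ by lower‑order elements, and to derive the equioscillation property \eqref{Che Altern} from the classical Chebyshev alternation theorem for Haar subspaces, invoking the $T$-system hypothesis in two ways: to manufacture an improving perturbation when alternation fails, and to bound the number of sign changes of the error function. First I would set $V:=\mathrm{Span}\{u_0,\dots,u_{k-1}\}$. Since $\mathcal F$ is an $M$-system, $\{u_0,\dots,u_{k-1}\}$ is a $T$-system of degree $k-1$ on $\overline I$; in particular these functions are linearly independent, so $V$ is a $k$-dimensional subspace of the Banach space $C(\overline I)$. A finite-dimensional subspace is proximinal, so the continuous, coercive map $(a_0,\dots,a_{k-1})\mapsto\norm{u_k-\sum_{i<k}a_iu_i}_\infty$ attains its infimum $\rho$ at some $(a_0,\dots,a_{k-1})\in\R^{k}$. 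Put $E_k:=u_k-\sum_{i<k}a_iu_i$, so that $\norm{E_k}_\infty=\rho$. One has $\rho>0$, for otherwise $u_k\in V$, contradicting the linear independence of $\{u_0,\dots,u_k\}$ (again a $T$-system, of degree $k\le n$).

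The core step is to show that $E_k$ takes the values $\pm\rho$ with alternating signs at $k+1$ points; after rescaling this is exactly \eqref{Che Altern}. I would argue by contradiction in the standard way. Let $r+1$ be the largest size of a set $t_0<\dots<t_r$ with $E_k(t_j)=(-1)^{j}\varepsilon\rho$ for a fixed $\varepsilon\in\{\pm1\}$, and suppose $r\le k-1$. Partition $\overline I$ into finitely many closed subintervals overlapping only at endpoints, on the interiors of which $E_k$ stays strictly above $-\rho+\delta$ or strictly below $\rho-\delta$ for some $\delta>0$, grouped so that only $r$ "sign transitions" occur, and pick separation points $z_1<\dots<z_r$ in $\mathring I$ between consecutive groups. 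Because $V$ is a Haar subspace of dimension $k>r$, there is a nonzero $q\in V$ vanishing exactly at $z_1,\dots,z_r$ and changing sign at each of them, so that $\sgn q$ matches the extreme sign of $E_k$ on every group. Then $\norm{E_k-\lambda q}_\infty<\rho$ for all sufficiently small $\lambda>0$, contradicting the minimality of $\rho$. Hence $r\ge k$, i.e. $E_k$ equioscillates at no fewer than $k+1$ points.

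Conversely, $E_k$ is a nonzero generalized polynomial of degree $k$ in the $T$-system $\{u_0,\dots,u_k\}$, hence has at most $k$ zeros in $I$; since it has a zero strictly between any two consecutive alternation points, it equioscillates at at most $k+1$ points. Therefore it equioscillates at exactly $k+1$ points $x_0<\dots<x_k$, which is \eqref{Che Altern}. Finally, I would set $\Che_k:=c\,E_k=c\big(u_k-\sum_{i<k}a_iu_i\big)$ with $\abs{c}=\rho^{-1}$, so that $\norm{\Che_k}_\infty=1$, and fix the sign of $c$ by requiring $\Che_k(\max I)>0$; this is legitimate because $\max I$, being an endpoint of $\overline I$, is one of the equioscillation points, so $E_k(\max I)=\pm\rho\neq 0$ (a classical feature of Chebyshev systems on an interval), and in any case the normalisation \eqref{Che unique} can be read off as a choice, uniqueness being the content of the cited references. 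The three defining properties of $\Che_k$ then all hold.

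I expect the alternation argument of the second paragraph — specifically producing the perturbation $q$ from the Haar property together with the partition of $\overline I$ and the bookkeeping of signs — to be the only delicate point; everything else reduces to elementary compactness and the zero-counting property of $T$-systems already available from the definition of an $M$-system.
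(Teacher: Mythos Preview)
Your argument is the classical Chebyshev alternation proof for Haar subspaces and is correct in outline; the only genuinely delicate step, as you note, is producing the sign-matching perturbation $q\in V$, and your use of the Haar property of $\{u_0,\dots,u_{k-1}\}$ together with zero-counting handles it. The normalisation at $\max I$ is fine: even without invoking that the endpoint is an equioscillation point, $E_k(\max I)=0$ would give a $(k{+}1)$-st zero of a degree-$k$ generalized polynomial, so $E_k(\max I)\neq 0$ and the sign of $c$ is well determined.

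That said, there is nothing to compare against: the paper does not prove this theorem. It is quoted from \cite{MR1060735,MR1367960} as a known result, with no argument supplied. Your proof is essentially the textbook derivation one finds in those references (existence by compactness, alternation from the de la Vall\'ee Poussin/Haar perturbation argument, sharpness from the $T$-system zero bound), so you have reconstructed the cited proof rather than diverged from it.
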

\noindent Generalized Chebyshev polynomials give a new family of extrema Jordan type measures
(see Definition \ref{DEF TYPE}). The corresponding target measures are named \textit{ Chebyshev
measures}.

\subsubsection{Exact reconstruction of Chebyshev measures}

\noindent Considering the equioscillation property \eqref{Che Altern}, set
\begin{itemize}
 \item $\Eplus{\Che_k}$ as the set of the alternation point $x_i$ such that
$\sgn(\Che_k(x_{i}))=\norm{\Che_k}_\infty$,
 \item $\Eminus{\Che_k}$ as the set of the alternation point $x_i$ such that
$\sgn(\Che_k(x_{i}))=-\norm{\Che_k}_\infty$.
\end{itemize}
\noindent A direct consequence of the last definition is the following proposition.

\begin{proposition}\label{Cheby Exact}
 Let $\sigma$ be a signed measure having Jordan support included in
$(\Eplus{\Che_k},\Eminus{\Che_k})$, for some $1\leq k\leq n$. \textbf{Then} $\sigma$ is the
\textbf{unique} solution to generalized minimal extrapolation \eqref{support pursuit} given
$\K_n(\sigma)$,
i.e. its $(n+1)$ first generalized moments.
\end{proposition}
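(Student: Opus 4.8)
The plan is to verify that the generalized Chebyshev polynomial $\Che_k$ meets the three hypotheses of Lemma~\ref{dual polynomial lemma} for the relevant support set and sign pattern, and then invoke that lemma together with the remarks in Section~\ref{Cone} allowing the support of $\sigma$ to be merely \emph{included} in the prescribed set. Write the Jordan support of $\sigma$ as $(\S^+,\S^-)$ with $\S^+\subset\Eplus{\Che_k}$ and $\S^-\subset\Eminus{\Che_k}$, so that $\S:=\S^+\cup\S^-=\{x_1,\dots,x_s\}$ is a subset of the alternation points of $\Che_k$, and assign $\varepsilon_i:=+1$ if $x_i\in\Eplus{\Che_k}$, $\varepsilon_i:=-1$ if $x_i\in\Eminus{\Che_k}$. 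I would take $P:=\Che_k$ as the candidate generalized dual polynomial.

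First I would check condition $(ii)$: by the very definitions of $\Eplus{\Che_k}$ and $\Eminus{\Che_k}$ (the subsets of the equioscillation points $x_0<\dots<x_k$ on which $\sgn(\Che_k)=\pm\norm{\Che_k}_\infty=\pm1$ by \eqref{Che Altern} and \eqref{Che unique}), we have $\Che_k(x_i)=\varepsilon_i$ for every $x_i\in\S$, which is exactly $(ii)$. Next, condition $(iii)$: since $\Che_k\in\mathrm{Span}\{u_0,\dots,u_k\}\subset\mathrm{Span}\{u_0,\dots,u_n\}$ is a nonzero generalized polynomial of degree $k$ and $\mathcal F$ is a $T$-system of degree $k$, the function $\Che_k^2-1$ — or rather the analysis of where $|\Che_k|$ attains its maximum — shows that $\Che_k$ takes each of the values $\pm1$ only at points where it is a local extremum; a generalized polynomial of degree $k$ in a $T$-system has at most $k$ zeros, so $\Che_k-1$ and $\Che_k+1$ together cannot vanish at more than the $k+1$ equioscillation points, and by the standard argument for Chebyshev systems these are exactly the points $x_0,\dots,x_k$. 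Hence $|\Che_k(x)|<1$ for all $x\in I\setminus\{x_0,\dots,x_k\}\supset I\setminus\S$ (using also $\norm{\Che_k}_\infty=1$ on $\overline I=[-1,1]$), giving $(iii)$; one should be slightly careful to include the endpoints of $I$ in this strict-inequality claim, which is where the counting $\Index$ / the precise location of the alternation set from \cite{MR1367960} enters. Finally, condition $(i)$ is automatic because $\mathcal F=\{u_0,\dots,u_n\}$ is an $M$-system, hence in particular a $T$-system of degree $n\geq s-1$, so every $s\times s$ minor of the generalized Vandermonde array built from distinct nodes $x_1,\dots,x_s$ has full column rank — this is precisely the ``clearly satisfied'' observation already made at the start of Section~\ref{Tigre}.

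With $(i)$, $(ii)$, $(iii)$ all verified for $P=\Che_k$, Lemma~\ref{dual polynomial lemma} yields that every measure supported exactly on $\S$ with the matching signs is the unique \eqref{support pursuit} solution; and the strengthening noted in Section~\ref{Cone} (the proof in \ref{proof dual polynomial proposition} is unchanged if some weights vanish) upgrades this to: every $\sigma$ whose support is \emph{included} in $\S$, equivalently in $(\Eplus{\Che_k},\Eminus{\Che_k})$, with $\sgn(\sigma_i)=\varepsilon_i$ on its actual support, is the unique solution given $\K_n(\sigma)$. Since the hypothesis of the proposition is exactly that the Jordan support of $\sigma$ is included in $(\Eplus{\Che_k},\Eminus{\Che_k})$ for some $1\leq k\leq n$, this is the claim. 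The only genuinely delicate point is the strict inequality in $(iii)$ at the boundary of $I$ — i.e.\ confirming that the equioscillation set of $\Che_k$ given by \eqref{Che Altern}–\eqref{Che unique} really does capture \emph{all} points of $[-1,1]$ at which $|\Che_k|=1$ — for which I would cite the uniqueness and structure results for generalized Chebyshev polynomials in \cite{MR1367960,MR1060735}; everything else is bookkeeping.
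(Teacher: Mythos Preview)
Your approach coincides with the paper's: take $P=\Che_k$ as the generalized dual polynomial and appeal to Lemma~\ref{dual polynomial lemma} (via its corollary), noting that condition~$(i)$ is automatic because $\mathcal F$ is an $M$-system. The paper states no more than ``a direct consequence of the last definition'' for precisely this reason.

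There is, however, a slip in how you set things up. You define $\S=\{x_1,\dots,x_s\}$ to be the support of $\sigma$, which may be a \emph{proper} subset of the alternation set $\{x_0,\dots,x_k\}$ of $\Che_k$. With this choice of $\S$, condition~$(iii)$ of Lemma~\ref{dual polynomial lemma} fails: at any alternation point $x_j\notin\S$ one has $|\Che_k(x_j)|=1$, not $<1$. Correspondingly, your set inclusion ``$I\setminus\{x_0,\dots,x_k\}\supset I\setminus\S$'' is reversed --- since $\S\subset\{x_0,\dots,x_k\}$ one gets $I\setminus\{x_0,\dots,x_k\}\subset I\setminus\S$ --- so the strict inequality you establish on the smaller set does not propagate to the larger one.

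The remedy is the one you almost write down: apply Lemma~\ref{dual polynomial lemma} with $\S$ taken to be the \emph{full} alternation set $\{x_0,\dots,x_k\}$ and signs $\varepsilon_i=\Che_k(x_i)$; verify $(i)$--$(iii)$ for \emph{that} $\S$; and only then invoke the cone remark of Section~\ref{Cone} to pass to measures whose Jordan support is merely included in $(\Eplus{\Che_k},\Eminus{\Che_k})$. Your closing sentence (``support included in $\S$, equivalently in $(\Eplus{\Che_k},\Eminus{\Che_k})$'') already conflates these two readings of $\S$; making $\S$ the full alternation set from the outset resolves everything. Your instinct that the strict inequality in~$(iii)$ off the alternation set ultimately rests on the structure results for generalized Chebyshev polynomials in \cite{MR1367960,MR1060735} is then exactly where it belongs.
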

\noindent In the special case $k=n$, Proposition \ref{Cheby Exact} shows that \eqref{support
pursuit} recovers all signed measures with Jordan support included in
$(\Eplus{\Che_n},\Eminus{\Che_n})$ from $(n+1)$ first generalized moments. Note that
$\Eplus{\Che_n}\cup\Eminus{\Che_n}$ has size $n$. Hence, this proposition shows that, among all
signed measure on $[-1,1]$, \eqref{support pursuit} can recover a signed measure of support size
$n$
from only $(n+1)$ generalized moments. In fact, any measure with Jordan
support included in $(\Eplus{\Che_n},\Eminus{\Che_n})$ can be uniquely defined by only $(n+1)$
generalized moments.

As far as we know, it is difficult to give the corresponding
generalized Chebyshev polynomials for a given family $\mathcal F=\{u_0,u_1,\dotsc,u_n\}$.
Nevertheless, Borwein, Erd\'elyi, and Zhang \cite{MR1299454} gives the explicit
form of $\Che_k$ for rational spaces (i.e. the Stieltjes transformation in our framework). See
also \cite{MR997928, MR1421165} for some applications in \textit{optimal design}.

\subsubsection{Construction of Chebyshev polynomials for Stieltjes transformation}
We consider the case of Stieltjes transformation described in Section \ref{SRPM}. In this
case, Chebyshev polynomials $\Che_k$ can be precisely described. Consider homogeneous
$M$-system on $[-1,1]$ defined by
\[\widetilde{\mathcal F}_s^n=\Big\{1,\frac1{z_1-x},\frac1{z_2-x},\dotsc,\frac1{z_n-x}\Big\}\,,\]
where $(z_i)_{i=1}^k\subset\mathbb C\setminus[-1,1]$.

Reproducing \cite{MR1367960}, we can construct generalized Chebyshev polynomials of the first
kind. It yields
\[\Che_k(x)=\frac12(f_k(z)+f_k(z)^{-1}),\quad\forall x\in[-1,1]\,,\]
where $z$ is uniquely defined by $x=\frac12(z+z^{-1})$ and $\abs z<1$, and $f_k$ is a known
analytic function in a neighborhood of the closed unit disk. Moreover this
analytic function can be expressed in terms of only $(z_i)_{i=1}^k$. We refer to \cite{MR1367960}
for further details.

\section{The nullspace property for measures}\label{NSPSection}
In this section we consider \textbf{any} countable family $\mathcal F=\{u_0,u_1,\dotsc,u_n\}$ of
\textit{continuous functions} on $I$. In particular we do not assume that $\mathcal F$ is a
non-homogeneous $M$-system. We aim at deriving a sufficient condition for exact reconstruction of
signed measures. More precisely, we are concerned with giving a related property to the
\textit{nullspace property} \cite{MR2449058} of compressed sensing.

Note that the solutions to program \eqref{support pursuit} depend only on the first
$(n+1)$ elements of $\mathcal F$ and on the target measure $\sigma$. We investigate
the condition that the family $\mathcal F$ must satisfy to ensure exact reconstruction. In the
meantime, Cohen, Dahmen and DeVore introduced \cite{MR2449058} a relevant condition, the
\textit{nullspace property}. Their property binds the geometry of the
nullspace of $A$ and \textit{the best $k$-term approximation} of the target $x_0$ given the
observation $Ax_0$. This well known property can be stated as follows.
\subsection{The nullspace property in compressed sensing}
Let $A\in\R^{n\times p}$ be a matrix. We say that $A$ satisfies the nullspace property of order $s$
\textit{if and only if} for all nonzero vectors $h$ in the nullspace of $A$, and all subsets of
entries $S$ of size $s$, \[\norm{h_S}_1<\norm{h_{S^c}}_1\,,\] where $h_S$ denotes the
vector whose $i$-th entry is $h_i$ if $i\in S$ and $0$
otherwise. It is now standard that basis pursuit \eqref{BasisPursuit} exactly recovers all
$s$-sparse vectors $x_0$ (i.e. vectors with at most $s$ nonzero entries) \textit{if and only if}
the design matrix $A$ satisfies the nullspace property of order $s$.

In this section, we show that the same property holds for generalized minimal extrapolation.
According to the
compressed sensing literature, we keep the same name for this related property.
\subsection{The nullspace property for generalized minimal extrapolation}\label{NSP}
Consider the linear map $\K_n:\mu\mapsto(c_0(\mu),\dotsc,c_{n}(\mu))$ from $\M$ to $\R^{n+1}$. We
refer to this map as the
\textit{generalized moment morphism}. Its nullspace $\ker(\K_n)$ is a linear subspace of $\M$. The
\textit{Lebesgue decomposition theorem} is the precious tool used to define the nullspace property.
\subsubsection{The $S$-atomic part}
Let $\mu\in\M$ and $S=\{x_1,\dotsc,x_s\}$ be a finite subset of $I$. Define
$\Delta_S=\sum_{i=1}^s\delta_{x_i}$ as the \textit{Dirac comb} with support $S$. The Lebesgue
decomposition of $\mu$ with respect to $\Delta_S$ gives
\begin{equation}\label{decomposition}
 \mu=\mu_S+\mu_{S^c}\,,
\end{equation}
where $\mu_S$ is a \textbf{discrete} measure whose support is included in $S$, and $\mu_{S^c}$
is a measure whose support is included in $S^c:=I\setminus S$.
\subsubsection{The nullspace property with respect to a Jordan support family}
First, as in the standard compressed sensing context \cite{MR2449058}, we define the
nullspace property with respect to a Jordan support family $\varUpsilon$. This property is only a
sufficient condition for exact reconstruction of finite measure; see Proposition
\ref{nullspaceproperty}.
\begin{definition}[Nullspace property with respect to a Jordan support family $\varUpsilon$]
We say that the generalized moment morphism $\K_n$ satisfies the nullspace property with respect to
a Jordan support family $\varUpsilon$ \textbf{if and only if} it satisfies the
following property. For all nonzero
measures $\mu$ in the nullspace of $\K_n$, and for all $(\S^+,\S^-)\in\varUpsilon$,
\begin{equation}\label{Spread}
 \normTV{\mu_S}<\normTV{\mu_{S^c}},
\end{equation}
where $\S=\S^+\cup\S^-$.

\noindent \textbf{---} The \textbf{weak} nullspace property states as follows: For all nonzero
measures $\mu$ in the nullspace of $\K_n$, and for all $(\S^+,\S^-)\in\varUpsilon$,
\begin{equation*}
 \normTV{\mu_S}\leq\normTV{\mu_{S^c}},
\end{equation*}
where $\S=\S^+\cup\S^-$.
\end{definition}
\noindent Given a nonzero measure $\mu$ in the nullspace of $\K_n$, this property means that more
than half of the total variation of $\mu$ cannot be concentrated on a small subset. The nullspace
property is a key to exact reconstruction as shown in the following proposition.
\begin{proposition}\label{nullspaceproperty}
Let $\varUpsilon$ be a
Jordan support family. Let $\sigma$ be a signed measure having a Jordan
support in $\varUpsilon$. \textbf{If} the generalized moment morphism $\K_n$ satisfies the
nullspace property with respect to $\varUpsilon$, \textbf{then}, the measure $\sigma$ is the
\textbf{unique} solution of generalized minimal extrapolation \eqref{support pursuit} given the
observation $\K_n(\sigma)$.

\noindent \textbf{---} \textbf{If} the generalized moment morphism $\K_n$ satisfies the
\textbf{weak} nullspace property with respect to $\varUpsilon$, \textbf{then}, the measure $\sigma$
is \textbf{a} solution of generalized minimal extrapolation \eqref{support pursuit} given the
observation $\K_n(\sigma)$.
\end{proposition}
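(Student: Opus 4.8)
The plan is to mimic the classical basis-pursuit argument: compare the target $\sigma$ with an arbitrary competitor $\mu$ satisfying the same moment constraints, write the difference $h:=\mu-\sigma$, observe that $h$ lies in $\ker(\K_n)$, and then use the nullspace inequality \eqref{Spread} to force $\normTV{\mu}>\normTV{\sigma}$ (or $\geq$ in the weak case). First I would fix $(\S^+,\S^-)\in\varUpsilon$ a Jordan support of $\sigma$ and set $\S=\S^+\cup\S^-$, so that $\sigma=\sigma_S$ (i.e. $\sigma$ is supported in $\S$ with the sign pattern prescribed by its Jordan decomposition). Let $\mu$ be any feasible measure, $h=\mu-\sigma$, and apply the Lebesgue decomposition \eqref{decomposition} with respect to $\Delta_\S$ to write $h=h_S+h_{S^c}$, hence $\mu=\sigma+h_S+h_{S^c}$ with $\sigma+h_S$ supported in $\S$ and $h_{S^c}$ supported in $\S^c$; since these two have disjoint supports, $\normTV{\mu}=\normTV{\sigma+h_S}+\normTV{h_{S^c}}$.

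Next I would lower-bound $\normTV{\sigma+h_S}$ by a reverse triangle inequality: on $\S$ the measure $\sigma$ has, at each atom $x_i\in\S$, a definite sign $\varepsilon_i$ dictated by the Jordan decomposition, so one gets $\normTV{\sigma+h_S}\geq\normTV{\sigma}-\normTV{h_S}$. Combining,
\begin{equation*}
\normTV{\mu}\;\geq\;\normTV{\sigma}-\normTV{h_S}+\normTV{h_{S^c}}.
\end{equation*}
Now if $h=0$ then $\mu=\sigma$; otherwise $h$ is a nonzero element of $\ker(\K_n)$ because $\K_n(\mu)=\K_n(\sigma)$, so the nullspace property \eqref{Spread} yields $\normTV{h_S}<\normTV{h_{S^c}}$, hence $\normTV{\mu}>\normTV{\sigma}$. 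This shows $\sigma$ is the strict minimizer, i.e. the unique solution of \eqref{support pursuit}. In the weak case one has only $\normTV{h_S}\leq\normTV{h_{S^c}}$, which gives $\normTV{\mu}\geq\normTV{\sigma}$ for every feasible $\mu$, so $\sigma$ is a (not necessarily unique) solution.

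The one delicate point — really the only thing beyond bookkeeping — is justifying the two additivity/reverse-triangle facts at the level of total variation of signed measures with possibly non-atomic parts: namely $\normTV{\nu_1+\nu_2}=\normTV{\nu_1}+\normTV{\nu_2}$ when $\nu_1,\nu_2$ have supports contained in the disjoint sets $\S$ and $\S^c$, and $\normTV{\sigma+h_S}\ge\normTV{\sigma}-\normTV{h_S}$. The first follows because the Hahn–Jordan decomposition of $\nu_1+\nu_2$ is the concatenation of those of $\nu_1$ and $\nu_2$ (their positive/negative parts live on disjoint carriers), and the second is the reverse triangle inequality $\normTV{a+b}\ge\normTV{a}-\normTV{b}$ valid for any norm. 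I would also note explicitly — as the paragraph preceding Remark \ref{subgradient} already flags — that the argument never uses that $\sigma$ is supported on \emph{all} of $\S$: if some atoms of $\S$ carry zero mass of $\sigma$ the estimate $\normTV{\sigma+h_S}\ge\normTV{\sigma}-\normTV{h_S}$ still holds, so the conclusion applies to every measure whose Jordan support is \emph{included} in a pair of $\varUpsilon$.
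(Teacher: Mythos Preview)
Your proof is correct and follows essentially the same route as the paper's: decompose the difference $h=\mu-\sigma$ with respect to $\S$, use $\normTV{\mu}=\normTV{\sigma+h_\S}+\normTV{h_{\S^c}}\ge\normTV{\sigma}-\normTV{h_\S}+\normTV{h_{\S^c}}$, and apply the nullspace inequality. The only cosmetic difference is that the paper starts from a minimizer $\sigmastar$ and argues by contradiction, whereas you compare $\sigma$ directly to an arbitrary feasible $\mu$; your version also spells out the weak case and the additivity of the $TV$-norm on disjoint supports, which the paper leaves implicit.
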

\begin{proof}
See \ref{proof nullspaceproperty}.
\end{proof}
\noindent As far as we know, it is difficult to check the nullspace property. In the following, we
give an example such that the weak nullspace property is satisfied.

\subsection{The spaced out interpolation}
We recall that $S_\Delta$ is the set of all pairs $(S^+,S^-)$ of subsets of $I=[-1,1]$
such that
\begin{equation}\label{S Delta}
 \forall x,y\in S^+\cup S^-,\ x\neq y,\quad \abs{x-y}\geq\Delta.
\end{equation}
The next lemma shows that if $\Delta$ is large enough then there exists a polynomial of degree $n$,
with supremum norm not greater than $1$, that interpolates $1$ on the set $\S^+$ and $-1$ on
the set $\S^-$.
\begin{lemma}\label{Delta}
For all $(S^+,S^-)\in S_\Delta$, there exists a
polynomial $P_{(S^+,S^-)}$ such that
\begin{itemize}
  \item $P_{(S^+,S^-)}$ has degree $n$ not greater than $(2/\sqrt{\pi})\, (\sqrt
e/\Delta)^{5/2+1/\Delta}$,
 \item $P_{(S^+,S^-)}$ is equal to $1$ on the set $S^+$,
\item $P_{(S^+,S^-)}$ is equal to $-1$ on the set $S^-$,
\item and $\lVert {P_{(S^+,S^-)}}\lVert_\infty\leq1$ over $I$.
\end{itemize}
\end{lemma}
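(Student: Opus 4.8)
The plan is to build the interpolating polynomial by summing scaled and translated copies of a single "bump" polynomial, one bump for each point of $S^+ \cup S^-$. First I would fix a master bump: a polynomial $Q$ of low degree with $Q(0)=1$, $0 \le Q \le 1$ on $[-1,1]$, and $Q$ very small outside a neighbourhood of $0$ of radius comparable to $\Delta/2$. The natural candidate is a normalized power of a Chebyshev-type polynomial, e.g.\ $Q(x) = \big(T_d(1 - \beta x^2)\big)^{-1}$-flavoured constructions, or more simply a high power of a quadratic $ \big(1 - (x/\rho)^2\big)_+^{}$ replaced by its polynomial majorant; the cleanest route uses the fact that $\cos(d \arccos(\cdot))$ grows like $ \rho^{-d}$ off $[-1,1]$, so a ratio of Chebyshev polynomials gives a peak at $0$ of height $1$ that decays geometrically once $|x| \gtrsim \Delta$. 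I would then set
\[
 P_{(S^+,S^-)}(x) \;=\; \sum_{t \in S^+} Q(x-t) \;-\; \sum_{t \in S^-} Q(x-t)\,.
\]
By the $\Delta$-separation \eqref{S Delta}, at any prescribed point $t_0 \in S^+$ the dominant term is $Q(x-t_0)=1$, and all other bumps contribute a geometrically small tail; choosing the degree $d$ of the bump large enough (logarithmic in the number of points, hence ultimately controlled by $1/\Delta$ since the points live in $[-1,1]$) forces $P(t_0)=1$ exactly if $Q$ is designed to vanish at the other grid points, or approximately $1$ with the error absorbed by a final rescaling. Condition $\norm{P}_\infty \le 1$ over $I$ follows from the same geometric-decay estimate: away from every $t$ the value is tiny, and near a given $t$ the contribution of the far bumps is bounded by a convergent geometric series whose sum is $<$ the slack between $Q$'s peak and $1$.

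The key steps, in order, are: (1) construct $Q$ with peak $1$ at $0$, range in $[0,1]$ on $[-1,1]$, degree $d$, and off-peak decay $Q(x) \le C \rho^{-d}$ for $|x| \ge \Delta/2$ where $\rho = \rho(\Delta)>1$; (2) sum the signed translates to form $P_{(S^+,S^-)}$, noting its degree is $d$ (translation preserves degree, and the finitely many summands do not increase it); (3) choose $d$ as a function of $\Delta$ so that the total tail $\sum_{k \ge 1} C\rho^{-d}$ over the at most $2/\Delta + 1$ competing bumps at spacing $\ge \Delta$ is smaller than any needed tolerance — this is where the explicit bound $d \le (2/\sqrt{\pi})(\sqrt e/\Delta)^{5/2 + 1/\Delta}$ comes from, after optimizing $\rho(\Delta)$ and bookkeeping the Stirling-type estimate for how sharp a degree-$d$ polynomial peak can be; (4) verify interpolation: $P(t_0) = \varepsilon_{t_0} + (\text{tail}) = \varepsilon_{t_0}$ exactly, either by arranging $Q$ to vanish on $(\Delta\mathbb{Z})\setminus\{0\}$ or by a harmless linear renormalization $P \mapsto P/\max_{t}|P(t)|$ that keeps $\norm{P}_\infty \le 1$; (5) verify $\norm{P}_\infty \le 1$ globally by splitting $I$ into the union of $\Delta/2$-neighbourhoods of the grid points and their complement, bounding $|P|$ by $Q(0) + (\text{geometric tail}) \le 1$ on the former and by $(\text{geometric tail}) \le 1$ on the latter.

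The main obstacle is step (3): getting the quantitative degree bound in the stated closed form. Pinning down the sharpest peak-to-tail ratio achievable by a degree-$d$ polynomial on $[-1,1]$ — essentially a Remez/Chebyshev extremal estimate — and then converting "tail $< $ tolerance" into an explicit $d(\Delta)$ requires careful control of the decay rate $\rho(\Delta)$ of a Chebyshev polynomial just outside its oscillation interval, together with a Stirling estimate that accounts for both the $5/2$ exponent (polynomial overhead in the construction) and the $1/\Delta$ exponent (number of grid points in $[-1,1]$). The interpolation and sup-norm verifications (steps 4–5) are then routine geometric-series bookkeeping once the bump $Q$ and degree $d$ are in hand.
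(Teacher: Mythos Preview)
Your plan is genuinely different from the paper's route, and as written it has a real gap at the point where you need \emph{both} exact interpolation and $\lVert P\rVert_\infty\le 1$. With $Q(0)=1$ and $0\le Q\le 1$, at a node $t_0\in S^+$ whose nearest neighbours also lie in $S^+$ one gets
\[
P(t_0)\;=\;1+\sum_{t\in S^+\setminus\{t_0\}}Q(t_0-t)-\sum_{t\in S^-}Q(t_0-t)\;>\;1,
\]
so $\lVert P\rVert_\infty>1$ already at a prescribed point. Neither fix in your step~(4) repairs this: the points in $S_\Delta$ are merely $\Delta$-separated, not on the lattice $\Delta\mathbb{Z}$, so a single master bump cannot be made to vanish at all the other nodes; and dividing by $\max_t|P(t)|$ forces only one interpolation value to be exactly $\pm1$, not all of them. (The standard way to salvage the bump idea is to replace the unit weights by coefficients $c_t$ solving the diagonally dominant system $\sum_t c_t\,Q(t_0-t)=\varepsilon_{t_0}$ and then redo the sup-norm estimate with $c\approx\varepsilon$; that works, but it is a different argument from the one you sketched.) A secondary technical point: translating a polynomial bump $Q$ controlled on $[-1,1]$ forces you to evaluate $Q(\cdot-t)$ on $[-2,2]$, where a polynomial is no longer small; the bump has to be built on the larger interval from the outset.

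The paper sidesteps all of this by a composition trick rather than a sum of translates. It takes the Lagrange basis $l_k$ at the nodes $x_1,\dots,x_s$, bounds $\lVert l_k\rVert_\infty\le L(\Delta)$ using only the $\Delta$-separation, and then chooses extrema $\zeta_1,\dots,\zeta_s$ of a Chebyshev polynomial $T_m$ with $T_m(\zeta_k)=\varepsilon_k$ and $|\zeta_k|\le 1/(sL(\Delta))$. The degree-$(s-1)$ interpolant $q(x)=\sum_k\zeta_k\,l_k(x)$ then satisfies $|q|\le 1$ on $I$, and $P=T_m\circ q$ yields $P(x_k)=T_m(\zeta_k)=\varepsilon_k$ exactly together with $\lVert P\rVert_\infty\le 1$ for free. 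The stated degree bound comes from estimating $L(\Delta)$ (a factorial in $s\le 2/\Delta+1$, hence the Stirling input) and from how large $m$ must be so that $T_m$ has enough extrema of each sign inside $[-1/(sL(\Delta)),\,1/(sL(\Delta))]$.
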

\begin{proof}
See \ref{proof Delta}.
\end{proof}

\noindent This upper bound is meant to show that one can interpolate any sign sequence on
$S_\Delta$. Let us emphasize that this result is far from being sharp. Considering
$L_2$-minimizing polynomials under fitting constraint, the authors of the present paper believe
that one can greatly improve the upper bound of Lemma \ref{Delta}. Indeed, our
numerical experiments are in complete agreement with this comment. Invoking Lemma \ref{dual
polynomial lemma}, Lemma \ref{Delta} gives the next proposition.
\begin{proposition}\label{NSP Delta}
Let $\Delta$ be a positive real. \textbf{If} $n\geq (2/\sqrt{\pi})\, (\sqrt
e/\Delta)^{5/2+1/\Delta}$ \textbf{then}
$\K_n$ satisfies the weak nullspace property with respect to $S_\Delta$.
\end{proposition}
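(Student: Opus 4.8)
The plan is to derive the weak nullspace property directly from the interpolation result of Lemma~\ref{Delta}, in the same way that the classical nullspace property follows from a dual certificate. Throughout, the underlying family is the standard polynomial system $\mathcal F_p^n=\{1,x,\dotsc,x^n\}$, so that $P\in\mathrm{Span}\{u_0,\dotsc,u_n\}$ exactly when $\deg P\leq n$. Fix a nonzero $\mu\in\ker\K_n$ and a pair $(\S^+,\S^-)\in S_\Delta$, and put $\S=\S^+\cup\S^-$; write $\mu=\mu_\S+\mu_{\S^c}$ for the Lebesgue decomposition of $\mu$ with respect to the Dirac comb $\Delta_\S$. The goal is to prove $\normTV{\mu_\S}\leq\normTV{\mu_{\S^c}}$.

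The key point is that one is free to choose which points of $\S$ are labelled $+$ and which are labelled $-$: since $\S$ is $\Delta$-separated, any partition of $\S$ into two parts again belongs to $S_\Delta$. Writing $\mu_\S=\sum_{x\in\S}\mu(\{x\})\,\delta_x$, set $\widetilde\S^+=\{x\in\S:\mu(\{x\})\geq0\}$ and $\widetilde\S^-=\{x\in\S:\mu(\{x\})<0\}$, so that $(\widetilde\S^+,\widetilde\S^-)\in S_\Delta$. Apply Lemma~\ref{Delta} to this pair to get a polynomial $P$ with $\norm P_\infty\leq1$ over $I$, $P\equiv1$ on $\widetilde\S^+$, $P\equiv-1$ on $\widetilde\S^-$, and $\deg P\leq(2/\sqrt\pi)\,(\sqrt e/\Delta)^{5/2+1/\Delta}\leq n$ by the hypothesis of the proposition. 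Hence $P\in\mathrm{Span}\{u_0,\dotsc,u_n\}$ and, since $\mu\in\ker\K_n$, we get $\int_I P\,\d\mu=0$.

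It remains to split this integral along the Lebesgue decomposition. Because $P(x)=\sgn(\mu(\{x\}))$ for every $x\in\S$ (the value at points where $\mu(\{x\})=0$ being irrelevant),
\[
\int_I P\,\d\mu_\S=\sum_{x\in\S}\mu(\{x\})\,P(x)=\sum_{x\in\S}\abs{\mu(\{x\})}=\normTV{\mu_\S},
\]
while $\abs{\int_I P\,\d\mu_{\S^c}}\leq\norm P_\infty\,\normTV{\mu_{\S^c}}\leq\normTV{\mu_{\S^c}}$. Combining these with $0=\int_I P\,\d\mu_\S+\int_I P\,\d\mu_{\S^c}$ yields $\normTV{\mu_\S}=-\int_I P\,\d\mu_{\S^c}\leq\normTV{\mu_{\S^c}}$, which is precisely the weak nullspace property with respect to $S_\Delta$.

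The argument is a short consequence of Lemma~\ref{Delta}, so the real work is hidden in that lemma's bound; the only subtlety in the present proof is the re-partitioning step, namely that one should split $\S$ according to the signs of the atoms of $\mu$ rather than along the prescribed $(\S^+,\S^-)$, which is legitimate exactly because $S_\Delta$ is stable under taking arbitrary partitions into two parts of a $\Delta$-separated support. Note finally that this route produces only the \emph{weak} property: the inequality $\norm P_\infty\leq1$ is not strict at the interpolation nodes, so to obtain the strict nullspace property one would need a certificate satisfying $\abs{P(x)}<1$ off $\S$, in the spirit of condition $(iii)$ of Lemma~\ref{dual polynomial lemma}.
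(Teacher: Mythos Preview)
Your proof is correct and follows essentially the same route as the paper: take a nonzero $\mu\in\ker\K_n$ and a pair in $S_\Delta$, re-partition the union $\S$ according to the signs of the atoms of $\mu$, apply Lemma~\ref{Delta} to this new pair to get a polynomial $P$ of degree at most $n$ with $\norm P_\infty\leq1$, and use $\int P\,\d\mu=0$ together with the Lebesgue decomposition to conclude. Your added closing remark, explaining why the argument only yields the \emph{weak} nullspace property, is a nice clarification not spelled out in the paper.
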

\begin{proof}
See \ref{proof NSP Delta}.
\end{proof}

\noindent The bound $(2/\sqrt{\pi})\, (\sqrt e/\Delta)^{5/2+1/\Delta}$ can be considerably improved
in actual practice. The following numerical experiment shows that this bound can be greatly
lowered.

\subsubsection*{Some simulations}

\begin{figure}[ht]
\centering
\includegraphics[height =5cm]{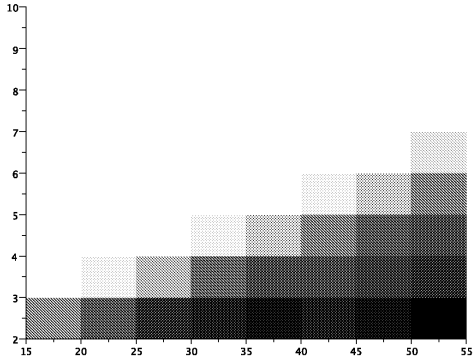}
\caption{Consider the family $\mathcal F_{\cos{}}=\left\{1,\cos(\pi x),\cos(2\pi
x),\dotsc\right\}$ on $I=[0,1]$. Set $s=10$ the size of the target support. We are concerned with
signed measures with Jordan support in $S_\Delta$ (see \eqref{S Delta}). The abscissa represents
the values of $1/\Delta$ (with $\Delta=1/15,1/20,\dotsc,1/55$), and the ordinates
represent the values of $n$ (with $n=20,30,\dotsc,100$). For each value of $(\Delta,n)$, we draw
uniformly  $100$ realizations of signed measures and the corresponding $L_2$-minimizing
polynomial $P$. The gray scale represents the percentage of times that $\norm
P_\infty\leq1$ occurs. The white color means $100\%$ (\eqref{support pursuit} exactly recovers
\textbf{all}
the signed measures) while the black color represent $0\%$ (in all our experiments, the polynomial
$P$ is such that $\norm P_\infty>1$ over $I$).
}\label{ExpPoly}
 \end{figure}

\noindent Our numerical experiment consists in looking for a generalized polynomial satisfying the
assumption
of Lemma \ref{dual polynomial lemma}. We work here with the cosine system $(1, \cos(\pi
x),\cos(2\pi x),\dotsc, \cos(n\pi x))$ for various values of the integer $n$. As explained in
Section \ref{Tigre}, we can also consider the more classical power system $(1,
x,x^2,\dotsc, x^n)$, so that our numerical experiments may be interpreted in this
last frame.  We consider  signed measure having a support $\mathcal{S}$ with $|\mathcal{S}|=10$. We
consider $\Delta$-spaced out type measures for various values of $\Delta$. For each choice of
$\Delta$, we draw uniformly  $100$ realizations of signed measures. This means that the points
of $\mathcal{S}$ are uniformly drawn on $I^{10}$, where $I=[0,1)$ here, with the restriction that
the minimal distance
between two points is at least $\Delta$ and that there exists two of points that are exactly
$\Delta$ away from each other. Further, we uniformly randomized the signs of the
measure on each point of $\mathcal{S}$. As we wish to work with {\it true} signed measures, we do
not allow the case where all the signs are the same (negative or positive measures). Once we
simulated the set $\mathcal{S}^+$ and $\mathcal{S}^-$, we wish to build an interpolating
polynomial $P$ of degree $n$ having value $1$ on $\mathcal{S}^+$, $-1$ on $\mathcal{S}^-$ and
having a supremum norm minimum. As this last minimization is not obvious, we relax it to the
minimization of the $L_2$-norm with the extra restriction that the derivative of the
interpolation polynomial vanishes on $\mathcal{S}$. Hence, when this last optimization problem has
a
solution having a supremum norm not greater than $1$, Lemma \ref{dual polynomial lemma} may be
applied and \eqref{support pursuit} leads to exact reconstruction. The proportion of experimental
results,
where the supremum norm of the $L_2$ optimal polynomial is not greater than $1$, is
reported in Figure \ref{ExpPoly}.

In our experiments we consider the values $\Delta=1/15,1/20,\dotsc,1/55$. According to
Proposition \ref{NSP Delta}, the corresponding values of $n$ range from $10^{19}$ to $10^{59}$. In
our experiments, we find that $n=80$ suffices.
\bigskip

{\noindent \textbf{Acknowledgements:} The authors would like to thank
Jean Paul Calvi and Viet Hung Pham for fruitful comments. We also thank anonymous referees for
their careful reviews and their interesting suggestions.
}
 \appendix

\section{Proofs of Section \ref{GDP}.}

\subsection{Proof of Lemma \ref{dual polynomial lemma}}\label{proof dual polynomial
proposition}
Assume that a generalized dual polynomial $P$ exists. Let $\sigma$ be such
that $\sigma=\sum_{i=1}^s\,\sigma_i\,\delta_{x_i}$, with $\sgn(\sigma_i)=\varepsilon_i$. Let
$\sigma^\star$ be a solution of the generalized minimal extrapolation \eqref{support pursuit} then
$\int
P\,\d\sigma=\int P\,\d\sigmastar$. The equality $(ii)$ yields $\normTV\sigma=\int
P\,\d\sigma$.
 Combining the two previous equalities,
\[\normTV\sigma=\int P\,\d\sigma=\int
P\,\d\sigmastar=\sum_{i=1}^s\varepsilon_i\,\sigmastar_i+\int
P\,\d\sigmastar_{\S^c}\,,\]
where $\varepsilon_i=\sgn(\sigma_i)$ and
\[\sigmastar=\sum_{i=1}^s\,\sigmastar_i\,\delta_{x_i}+\sigmastar_{\S^c}\,,\] according to the
Lebesgue decomposition \eqref{decomposition}.
Since $\norm{P}_\infty=1$, we have
\[\sum_{i=1}^s\varepsilon_i\,\sigmastar_i+\int
P\,\d\sigmastar_{\S^c}\leq\normTV{\sigmastar_\S}+\normTV{
\sigmastar_{\S^c}}=\normTV{\sigmastar}.\]
Observe $\sigmastar$ is a solution of \eqref{support pursuit}, it follows that
$\normTV\sigma=\normTV{\sigmastar}$ and the above inequality is an equality.
It yields $\int P\,\d\sigmastar_{\S^c}=\normTV{\sigmastar_{\S^c}}$. Moreover we have the following
result.
\begin{lemma}
Let $\nu\in\M$ with its support included in $\S^c$.  \textbf{If} $\int
P\,\d\nu=\normTV{\nu}$ \textbf{then} $\nu=0$.
\end{lemma}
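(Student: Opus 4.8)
The plan is to play condition $(iii)$ of Lemma \ref{dual polynomial lemma} --- namely $\abs{P(x)} < 1$ for every $x \in [-1,1]\setminus\S$ --- against the hypothesis $\int P\,\d\nu = \normTV{\nu}$. Writing the Jordan decomposition $\nu = \nu^+ - \nu^-$ and $\abs{\nu} = \nu^+ + \nu^-$, I would first record the chain of inequalities, which uses only $\norm{P}_\infty \leq 1$ and the definition $\normTV{\nu} = \abs{\nu}(I)$:
\[\normTV{\nu} = \int_I 1\,\d\abs{\nu} \;\geq\; \int_I \abs{P}\,\d\abs{\nu} \;\geq\; \abs{\int_I P\,\d\nu} \;\geq\; \int_I P\,\d\nu.\]
The hypothesis asserts that the two extremes coincide, so every inequality above is forced to be an equality; in particular $\int_I (1 - \abs{P})\,\d\abs{\nu} = 0$.

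Next I would observe that $1 - \abs{P} \geq 0$ on all of $[-1,1]$ and that $\abs{\nu}$ is a finite nonnegative measure, so the vanishing of this integral forces $\abs{P} = 1$ $\abs{\nu}$-almost everywhere; equivalently $\abs{\nu}\big(\{\,\abs{P} < 1\,\}\big) = 0$, and hence $\normTV{\nu} = \abs{\nu}\big(\{\,\abs{P} = 1\,\}\big)$. By $(iii)$ the set $\{\abs{P} = 1\}$ is contained in the finite set $\S = \{x_1,\dotsc,x_s\}$. On the other hand $\mathrm{Supp}(\nu) \subseteq \S^c$, so no $x_i$ belongs to $\mathrm{Supp}(\nu)$; each $x_i$ therefore has an open neighbourhood on which $\nu$, and hence $\abs{\nu}$, vanishes, whence $\abs{\nu}(\{x_i\}) = 0$ for every $i$ and $\abs{\nu}(\S) = 0$. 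Combining the two displayed facts yields $\normTV{\nu} = 0$, that is $\nu = 0$.

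I do not expect any genuine obstacle here. The only two points deserving a one-line justification are the standard measure-theoretic fact that a nonnegative integrand with zero integral against a positive measure vanishes almost everywhere, and the remark that a point outside the (closed) support of a signed measure carries no total-variation mass. It is worth noting that this small lemma is precisely the mechanism that, back in the proof of Lemma \ref{dual polynomial lemma}, upgrades the feasibility of $\sigmastar$ into $\sigmastar_{\S^c} = 0$, so that any minimiser of \eqref{support pursuit} must be supported on $\S$.
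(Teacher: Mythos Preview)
Your proof is correct. The paper takes a slightly different route: it introduces the compact sets $\Omega_k = I \setminus \bigcup_{i=1}^s (x_i - \tfrac1k, x_i + \tfrac1k)$ and argues by contradiction that if $\normTV{\nu_{\Omega_k}} > 0$ for some $k$, then continuity of $P$ on the compact $\Omega_k$ gives $\sup_{\Omega_k}\abs{P} < 1$, hence $\int_{\Omega_k} P\,\d\nu < \normTV{\nu_{\Omega_k}}$, which contradicts $\int P\,\d\nu = \normTV{\nu}$; one then concludes via $\S^c = \bigcup_{k>0} \Omega_k$. Your argument bypasses the compact exhaustion by going straight to $\int_I (1-\abs{P})\,\d\abs{\nu} = 0$, which in one stroke confines $\abs{\nu}$ to the finite set $\{\abs{P}=1\}\subseteq\S$, and then the support hypothesis finishes. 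The paper's version makes the compactness mechanism explicit, while yours packages the same content into the standard ``nonnegative integrand with zero integral vanishes almost everywhere'' fact; both are elementary, and yours is arguably the cleaner of the two.
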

\begin{proof}
 Consider the compact set
\[\Omega_k=I\setminus\bigcup_{i=1}^s\,\big]x_i-\tfrac 1 k,\,x_i+\tfrac1 k\big[,\quad\forall k>0,\]
Suppose that there exists $k>0$ such that $\normTV{\nu_{\Omega_k}}\neq0$. Then the inequality
$(iii)$ leads to $\int_{\Omega_k}P\,\d\nu<\normTV{\nu_{\Omega_k}}$. It yields
\[\normTV{\nu}=\int
P\,\d\nu=\int_{\Omega_k}P\,\d\nu+\int_{\Omega_k^c}P\,\d\nu<\normTV{\nu_{\Omega_k}}+\normTV{\nu_{
\Omega_k^c}}=\normTV{\nu}\,,\]
which is a contradiction. We deduce that $\normTV{\nu_{\Omega_k}}=0$, for all $k>0$. The equality
$\nu=0$ follows
with ${\S^c}=\cup_{k>0}\Omega_k$ .
\end{proof}
\noindent This lemma shows that $\sigmastar$
is a discrete measure with its support included in $\S$. In this case, the moment constraint
$\K_n(\sigmastar-\sigma)=0$ can be written as a \textit{generalized
Vandermonde system},
\[
\begin{pmatrix}
    u_0(x_1) & u_0(x_2) & \dotsc & u_0(x_s)\\
    u_1(x_1) & u_1(x_2) & \dotsc & u_1(x_s)\\
    \vdots & \vdots &  & \vdots\\
    u_{n}(x_1) & u_{n}(x_2) & \dotsc & u_{n}(x_s)
  \end{pmatrix}
\begin{pmatrix}
      \sigmastar_1-\sigma_1\\
      \sigmastar_2-\sigma_2\\
    \vdots\\
   \sigmastar_s-\sigma_s
  \end{pmatrix}=0\,.
\]
From condition $(i)$, we deduce that the generalized
Vandermonde system is injective. \hfill$\square$

\subsection{Proof of the remark in Section \ref{subgradient}}\label{proof subgradient}
Let $\sigma$ belong to $\mathcal
F\big(x_1,\varepsilon_1,\dotsc,x_s,\varepsilon_s\big)$. Consider
the linear functional,
\[\Phi_f:\mu\mapsto\int_I f\,\d\mu,\]
where $f$ denotes a continuous bounded function.
By definition, any subgradient $\Phi_f$ of the $TV$-norm at point $\sigma$ satisfies, for all
measures
$\mu\in\M$,
\[\normTV\mu-\normTV\sigma\geq\Phi_f(\mu-\sigma).\]
Thus, one can easily check that $f$ is equal to $1$ (resp. $-1$) on $\mathrm{supp}(\sigma^+)$
(resp. $\mathrm{supp}(\sigma^-)$) and that $\norm f_\infty=1$. Conversely, any function
$f$ satisfying the latter condition leads to a subgradient $\Phi_f$. Therefore, when it exists, the
generalized dual polynomial $P$ is such that $\Phi_P$ is a
\textit{subgradient} of the $TV$-norm at point $\sigma$. Furthermore, let $\mu$ be
a feasible point (i.e. $\K_n(\mu)=\K_n(\sigma)$). Since $P$ is a generalized polynomial of order
$n$, we deduce that $\Phi_P(\mu-\sigma)=0$. Hence, the subgradient $\Phi_P$ is perpendicular to
the set of feasible points.\hfill$\square$
\section{Proofs of Section \ref{SRPM}}
\subsection{Proof of Theorem \ref{Exact reconstruction Theorem}}\label{proof Exact reconstruction
Theorem}
The proof essentially relies on Lemma \ref{dual polynomial lemma}.
Let $s$ be an integer. Let $\sigma$ be
a nonnegative measure. Let $\S=\{x_1,\dotsc,x_s\}\subset I$ be its support. The next
lemma
shows the existence of a generalized dual polynomial.
\begin{lemma}[Dual polynomial]\label{Existence dual polynomial}
Let $s$ be an integer and $n$ be such that $n=2s$. Let $\mathcal F$ be a homogeneous
$M$-system on $I$. Let $(x_1,\dotsc,x_s)$ be such that $\mathrm{Index}(x_1,\dotsc,x_s)\leq n$. Then
there exists a generalized polynomial $P$ of degree $d$ such that
 \begin{enumerate}[$(i)$]
  \item $s\leq d\leq n$,
  \item $P(x_i)=1,\ \forall\,i=1,\dotsc,s\,,$
  \item $\abs{P(x)}<1$ for all $x\notin\{x_1,\dotsc,x_s\}$.
\end{enumerate}
\end{lemma}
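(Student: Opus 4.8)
The plan is to build $P$ directly out of a nonnegative generalized polynomial furnished by Lemma \ref{nonnegative}. Since $\mathcal F$ is a homogeneous $M$-system, the constant function $1$ is a generalized polynomial, so it makes sense to subtract a nonnegative generalized polynomial from $1$. First I would apply Lemma \ref{nonnegative} to the points $x_1,\dotsc,x_s$: because $\mathrm{Index}(x_1,\dotsc,x_s)\leq n=2s$ by hypothesis, there exists a nonnegative generalized polynomial $Q$ of degree $d'\leq n$ whose \emph{only} zeros in $I$ are exactly $x_1,\dotsc,x_s$. Rescaling, I may assume $\norm{Q}_\infty\leq 1$ on $I$ (dividing $Q$ by its sup norm, which is positive and finite since $Q$ is continuous and not identically zero). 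Then set
\[
P := 1 - Q .
\]

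Now I would verify the three required properties. Property $(ii)$: since $Q(x_i)=0$ for each $i$, we get $P(x_i)=1$. Property $(iii)$: for $x\notin\{x_1,\dotsc,x_s\}$ we have $Q(x)>0$ (these are the only zeros of $Q$), and $Q(x)\leq \norm Q_\infty \le 1$, hence $0\le 1-Q(x) < 1$, so $\abs{P(x)} = P(x) < 1$; note $P(x)\ge 1-1 = 0 > -1$, so indeed $\abs{P(x)}<1$ on the complement of the node set. (In fact $P\ge 0$ everywhere, so the absolute value is automatic.) For property $(i)$, the degree $d$ of $P=1-Q$: since $\mathcal F$ is an $M$-system, $1$ lies in $\mathrm{Span}\{u_0\}$ and $Q\in\mathrm{Span}\{u_0,\dots,u_{d'}\}$, so $P\in\mathrm{Span}\{u_0,\dots,u_{d'}\}$ and thus $d\le d'\le n$. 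For the lower bound $d\ge s$, I would argue that $P-1 = -Q$ has $s$ distinct zeros $x_1,\dots,x_s$ in $I$; if $P$ had degree $d<s$ then $P-1$ would be a nontrivial generalized polynomial in a $T$-system of order $\le s-1$ (using that $1$ is itself a generalized polynomial of degree $0$, so $P-1$ has degree $\le\max(d,0)\le s-1$), which can have at most $s-1$ zeros — a contradiction. Hence $s\le d\le n$.

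The main obstacle, and the only genuinely delicate point, is the bookkeeping on the degree: one must use the $M$-system structure (a $T$-system of \emph{every} order $0\le k\le n$, not just order $n$) to make sense of ``degree of $P-1$'' and to invoke the zero-counting bound at the right order, and one must be careful that subtracting the degree-$0$ generalized polynomial $1$ does not raise the degree — this is exactly where homogeneity of $\mathcal F$ enters, since it guarantees $1\in\mathrm{Span}\{u_0\}$ and hence that $1-Q$ stays within $\mathrm{Span}\{u_0,\dots,u_{d'}\}$. Everything else (nonnegativity of $Q$, its zero set, the normalization) is immediate from Lemma \ref{nonnegative}. With Lemma \ref{Existence dual polynomial} in hand, Theorem \ref{Exact reconstruction Theorem} follows from Lemma \ref{dual polynomial lemma}, condition $(i)$ there being automatic because $\mathcal F$ is an $M$-system and $s\le n$.
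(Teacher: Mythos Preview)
Your proof is correct and follows essentially the same route as the paper: invoke Lemma~\ref{nonnegative} to obtain a nonnegative generalized polynomial $Q$ vanishing exactly at the $x_i$, then set $P=1-cQ$ (the paper uses a small constant $c$ rather than normalizing $Q$ to have sup norm $1$, but this is cosmetic). Your justification of the degree bound $s\le d\le n$ via the zero-counting property of $T$-systems is more explicit than the paper's, which simply asserts that the degree of $Q$ ``satisfies $(i)$''.
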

\noindent We recall that $\mathrm{Index}$ is defined by \eqref{Index}. Note that these
polynomials are presented in the first example of Definition \ref{DEF TYPE}.
\begin{proof}[Proof of Lemma \ref{Existence dual polynomial}]
Let $(x_1,\dotsc,x_s)$ be such that $\mathrm{Index}(x_1,\dotsc,x_s)\leq n$. From Lemma
\ref{nonnegative}, there exists a nonnegative polynomial $Q$ of degree $d$ that vanishes
exactly at the points $x_i$. Moreover, its degree $d$ satisfies $(i)$.

Since $Q$ is continuous on the compact set $\overline I$, it is bounded and there exists a real
$c$ such that $\norm Q_\infty<1/c$.
The generalized polynomial $P=1-cQ$ is the expected generalized polynomial.
\end{proof}
\noindent Observe that
\begin{itemize}
 \item Using Lemma \ref{Existence dual polynomial}, it yields that there exists a generalized dual
polynomial, of degree at most $n=2s$, which interpolates the value $1$ at points
$\{x_1,\dotsc,x_s\}$.
\item Since $\mathcal F=\{u_0,u_1,\dotsc,u_n\}$ is a $T$-system, the Vandermonde system given
by $(i)$ in Lemma \ref{dual polynomial lemma} has full column rank.
\end{itemize}
Invoke Lemma \ref{dual polynomial lemma} to conclude.\hfill$\square$

\begin{remark}
Since $\mathcal F$ is a homogeneous $M$-system, the constant function $1$ is a generalized
polynomial. Note that the linear combination $P=1-cQ$ is a generalized polynomial because $1$ is a
generalized polynomial.  This assumption is essential $($see \ref{counter_example}$)$.
\end{remark}

\subsection{Proof of Proposition \ref{Proposition Counter Example}}\label{proof Proposition
Counter Example}
 Let $\sigma=\sum_{i=1}^s\sigma_i\delta_{x_i}$ be a nonnegative measure.
Let $\mathcal S=\{x_1,\dotsc,x_s\}$ be its support. Let $n$ be an integer such that
$n\geq2s$.
\subsubsection*{Step 1:}  Let $\mathcal
F_h=\{1,u_1,u_2,\dotsc\}$ be a homogeneous $M$-system (the standard polynomials for instance).
Let $t_1,\dotsc,t_{n+1}\in I\setminus\mathcal S$ be distinct points. It follows that the
Vandermonde system $\Bigg(\begin{smallmatrix}
    1 & \dotsc & 1\\
    u_1(t_1) & \dotsc & u_1(t_{n+1})\\
    : & & :\\
    u_n(t_1) & \dotsc & u_n(t_{n+1})
  \end{smallmatrix}\Bigg)$ has full rank. Hence we may choose
$(\nu_1,\dotsc,\nu_{n+1})\in\R^{n+1}$
such that
\begin{itemize}
 \item $\nu=\displaystyle\sum_{i=1}^{n+1}\nu_i\delta_{t_i}$,
  \item and for all $k=0,\dotsc,n$, $\displaystyle\int_I u_k\d\nu=\int_I u_k\d\sigma$.
\end{itemize}

\subsubsection*{Step 2:} Set
\[r=\frac{\normTV\sigma}{\normTV\nu+1}\,.\]
Consider a positive continuous function $u_0$ such that
\begin{itemize}
 \item $u_0(x_i)=r$, for $i=1,\dotsc,s$,
\item $u_0(t_i)=1$, for $i=1,\dotsc,n+1$,
\item the function $u_0$ is not constant.
\end{itemize}
Set $\mathcal F=\{u_0,u_0\,u_1,u_0\,u_2,\dotsc\}$. Obviously, $\mathcal F$ is a
non-homogeneous $M$-system. As usual, let $\K_n$ denote the generalized moment morphism of order
$n$
derived from the family $\mathcal F$.
\subsubsection*{Last step:} Set
$\mu=r\,\nu$. An easy calculation gives $\K_n(\sigma)=\K_n(\mu)$. Note
that \[{\displaystyle\normTV\mu=\sum_{i=1}^{n+1}r\abs{\nu_i}=\frac{\sum\displaylimits_{i=1}^{n+1}
\abs{
\nu_i } } { \sum_ { i=1 } ^ { n+1 } \abs {
\nu_i}+1}\normTV\sigma<\normTV\sigma\,.}\]
\hfill$\square$

\subsection{Proof of Theorem \ref{Theorem Deterministic}}\label{proof Theorem Deterministic}
Set $\mathcal T=\{t_1,\dotsc,t_p\}$. Let  $\M_\T$ denote the set of all finite
measures of which support is included in $\T$. Let $\varTheta_\T$ be the linear
map defined by
\[\isoT:\Bigg\{\begin{array}{ccc}
         (\R^p,\,\ell_1) & \to & (\M_\T,\,\normTV{.})\\
	  (x_1,\dotsc,x_p) & \mapsto & \sum\limits_{i=1}^px_i\,\delta_{t_i}
        \end{array}\,.
\]
One can check that $\isoT$ is a bijective isometry. Moreover, it holds that
\begin{equation}\label{constraints}
 \forall y\in\R^p,\quad \K_n(\isoT(y))=Ay,
\end{equation}
where $A$ is the generalized Vandermonde system defined by
\[A=
\begin{pmatrix}
    1 & 1 & \dotsc & 1\\
    u_1(t_1) & u_1(t_2) & \dotsc & u_1(t_p)\\
    u_2(t_1) & u_2(t_2) & \dotsc & u_2(t_p)\\
    \vdots & \vdots &  & \vdots\\
    u_n(t_1) & u_n(t_2) & \dotsc & u_n(t_p)
  \end{pmatrix}
\,.
\]
In the meantime, let $x_0$ be a nonnegative $s$-sparse vector. Let
$\sigma=\isoT(x_0)$. Observe that the support size of $\sigma$ is at most $s$. Consequently,
Theorem \ref{Exact reconstruction Theorem} shows that $\sigma$ is the unique solution to
\eqref{support pursuit}. Since $\sigma\in\M_T$, we have that $\sigma$ is the unique solution to the
following program:
\[\sigma=\argmin\displaylimits_{\mu\in\M_\T}\normTV{\mu}\quad \text{s.t.} \
\K_n(\mu)=\K_n(\sigma)\,.\]
Using \eqref{constraints} and the isometry $\isoT$, it follows that $x_0$ is the unique solution to
the program:
\[x_0=\argmin\displaylimits_{y\in\R^p}\norm{y}_1\quad \text{s.t.} \
Ay=Ax_0\,.\]
\hfill$\square$
\section{Proofs of Section \ref{NSPSection}}

 \subsection{Proof of Proposition \ref{nullspaceproperty}}\label{proof nullspaceproperty}
 Let $\K_n$ be a generalized moment morphism that satisfies the nullspace
 property with respect to a Jordan support family $\varUpsilon$. Let $\sigma$ be a signed measure
of which Jordan support belongs to $\varUpsilon$. Let $\sigma^\star$ be a solution of
\eqref{support pursuit}. Observe that
$\lVert{\sigma^\star}\lVert_{TV}\,\leq\normTV\sigma$. Denote
$\mu=\sigma^\star-\sigma$ and note that $\mu\in\ker(\K_n)$. Then
 \begin{eqnarray*}
 \normTV{\sigmastar}& = & \normTV{\sigmastar_\S}+\normTV{\sigmastar_{\S^c}},\\
  & = & \normTV{\sigma+\mu_\S}+\normTV{\mu_{\S^c}},\\
  & \geq & \normTV{\sigma} - \normTV{\mu_\S}+\normTV{\mu_{\S^c}},
 \end{eqnarray*}
 where $\S$ denotes the support of $\sigma$. Suppose that $\mu\neq0$. The nullspace property
 yields that the measure $\mu$ satisfies inequality \eqref{Spread}. We
 deduce $\normTV{\sigmastar}>\normTV{\sigma}$, which is a contradiction.
 Thus $\mu=0$ and $\sigma^\star=\sigma$. \hfill$\square$

\subsection{Proof of Lemma \ref{Delta}}\label{proof Delta}
For sake of readability, we sketch the proof here. Let $(S^+,S^-)\in S_\Delta$. Set
$S=S^+\cup
S^-=\{x_1,\dotsc,x_s\}$.
Consider the Lagrange interpolation polynomials
\[l_k(x)=\frac{\prod_{i\neq k}(x-x_i)}{\prod_{i\neq k}(x_k-x_i)}\,,\]
for $1\leq k\leq s$. One can bound the supremum norm of $l_k$ over $[0,1]$ by
\[\norm{l_k}_\infty\leq L(\Delta),\]
where $L(\Delta)$ is an upper bound that depends only on $\Delta$. Consider the $m$-th
\textit{Chebyshev polynomial of the first order} $T_m(x)=\cos(m\arccos(x))$, for all $x\in[-1,1]$.
For a sufficiently large value of $m$, there exist $2s$ extrema $\zeta_i$ of $T_m$ such that
$\abs{\zeta_i}\leq 1/(s L(\Delta))$.  Interpolating values $\zeta_i$ at point $x_k$, we build
the expected polynomial $P$. We find that the polynomial $P$ has degree not greater than 
\[C\,(\sqrt e/\Delta)^{5/2+1/\Delta}\,,\]
where $C=2/\sqrt{\pi}$.\hfill$\square$

\subsection{Proof of Proposition \ref{NSP Delta}}\label{proof NSP Delta}
Let $\mu$ be a nonzero measure in the nullspace of $\K_n$ and $(\mathcal A,\mathcal B)$ be in
$S_\Delta$. Let $\S$ be equal to $\mathcal A\cup \mathcal B$. Let $\S^+$ (resp. $\S^-$) be the set
of
points $x$ in $\S$ such that the $\mu$-weight at point $x$ is nonnegative (resp. negative). Observe
that $\S=\S^+\cup\S^-$ and $(\S^+,\S^-)\in S_\Delta$. From Lemma \ref{Delta}, there exists
$P_{(S^+,S^-)}$ of degree not greater than $n$
such that $P_{(S^+,S^-)}$ is equal to $1$ on $S^+$, $-1$ on $S^-$, and $\lVert
{P_{(S^+,S^-)}}\lVert_\infty\leq1$. It yields
\[\int P_{(S^+,S^-)}\d\mu=\normTV{\mu_S}+\int_{S^c}
P_{(S^+,S^-)}\d\mu\geq\normTV{\mu_S}-\normTV{\mu_{S^c}}.\]
Since $\mu\in\mathrm{ker}(\K_n)$, it follows that $\int P_{(S^+,S^-)}\d\mu=0$.\hfill$\square$
\newpage
\section{Numerical Experiments}\label{Num}
\begin{figure}[ht]
 \centering
  \includegraphics[height = 4.5cm]{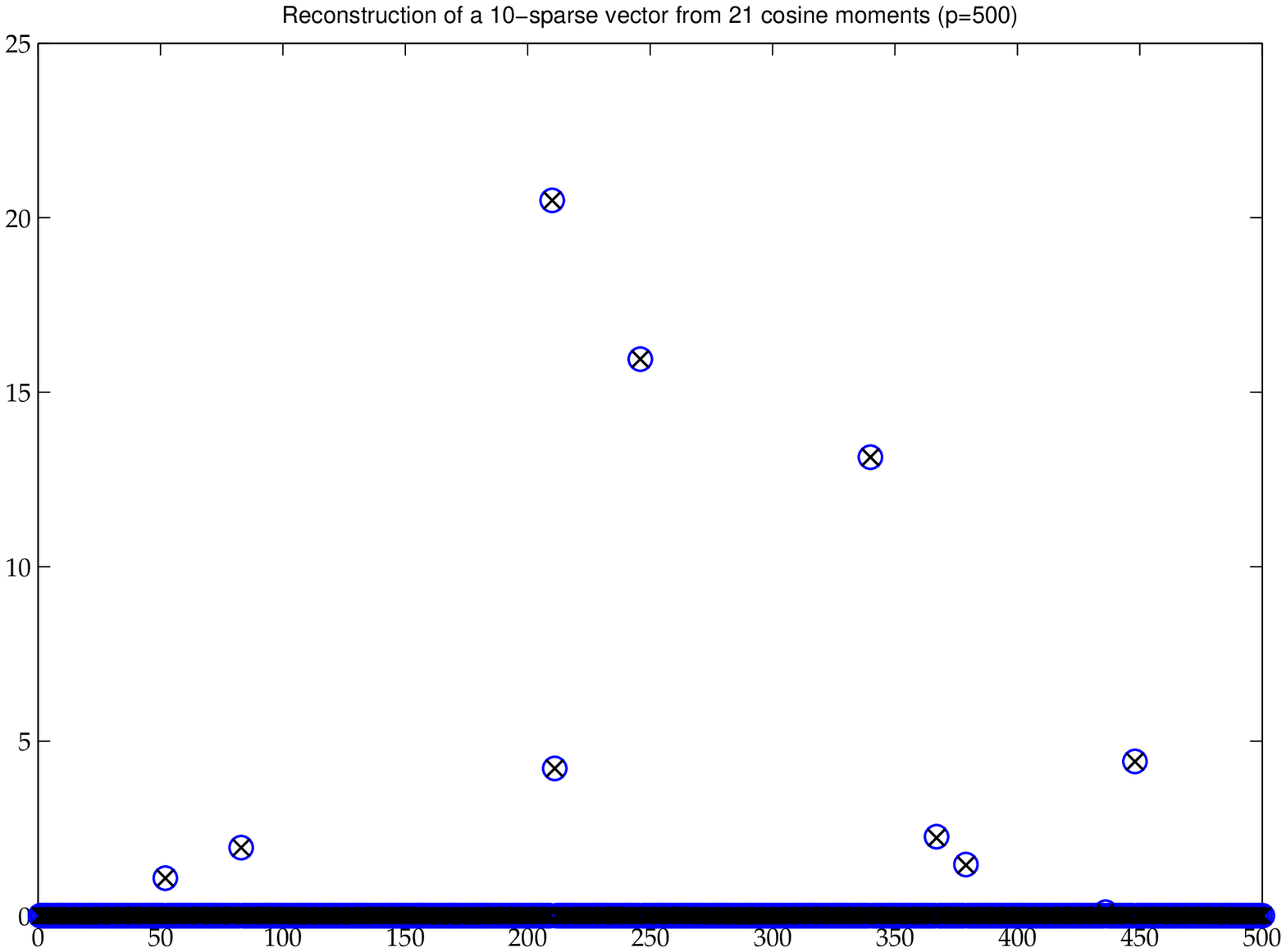}
\includegraphics[height =4.5cm]{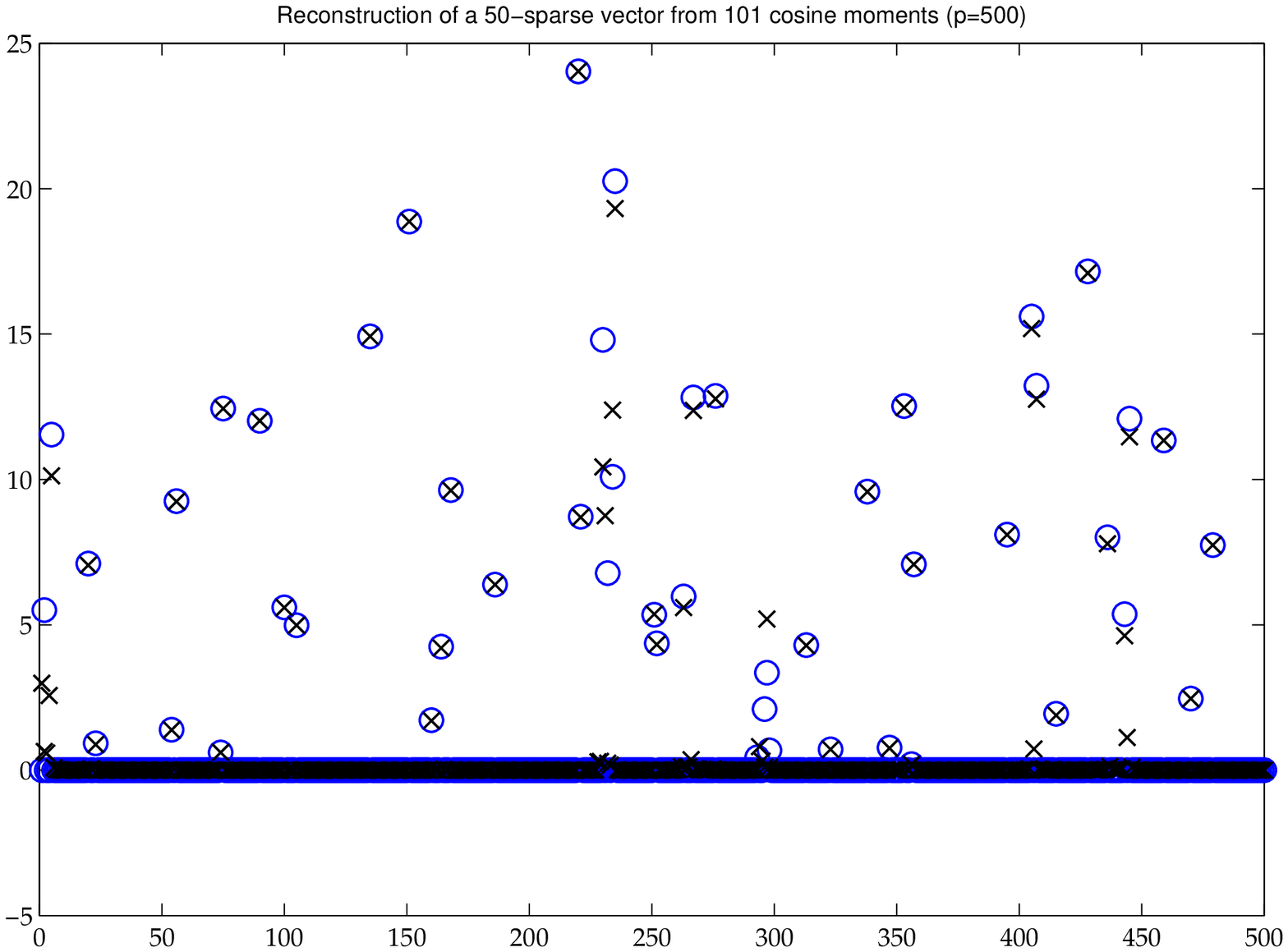}
\includegraphics[height =4.5cm]{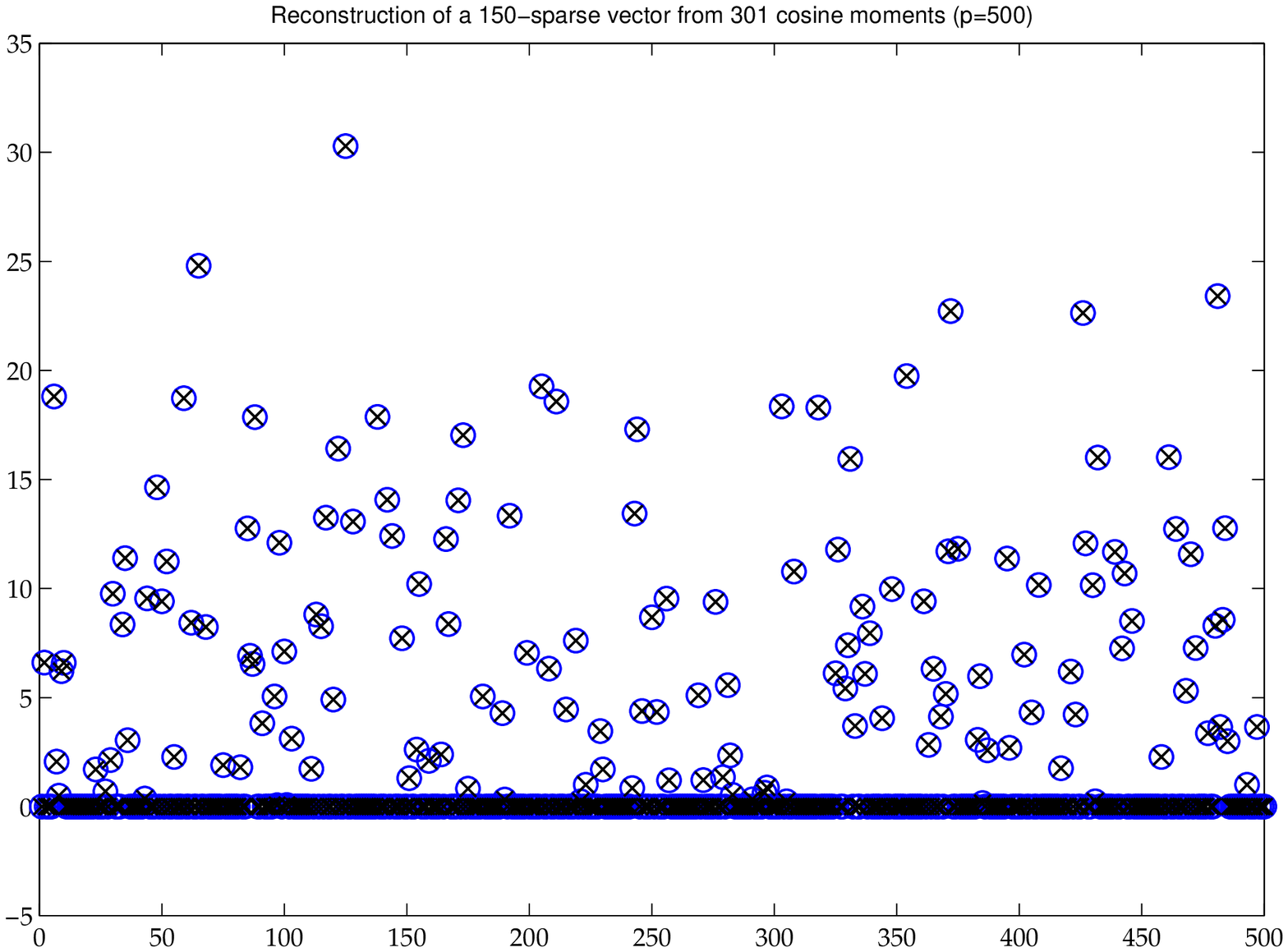}

 \caption{These numerical experiments illustrate Theorem \ref{Theorem
Deterministic}. We consider the family $\mathcal F_{\cos{}}=\left\{1,\cos(\pi x),\cos(2\pi
x),\dotsc\right\}$ and the points $t_k=k/(p+1)$, for $k=1,\dotsc,p$. The blue circles represent the
target vector $x_0$, while the black crosses represent the solution $x^\star$ of
\eqref{BasisPursuit}. The respective values are $s=10$, $n=21$, $p=500$; $s=50$, $n=101$, $p=500$;
and $s=150$, $n=301$, $p=500$.}

 \end{figure}

Note that some coefficients can be badly estimated (for
instance when $s=50$ and $n=101$). This might be due to the fact that we consider the limit case
$n=2s+1$. Nevertheless, this is not the case when we have very few
coefficients ($s=10$ and $n=21$) or a large number of moments ($s=150$ and $n=301$). As a general
rule, we observe faithful reconstruction.
\newpage

\bibliographystyle{amsalpha}
\nocite{*}
\bibliography{dCG_CCS}
\end{document}